\newtheorem{theorem}{Theorem}[section]
\newtheorem{corollary}[theorem]{Corollary}
\newtheorem{lemma}[theorem]{Lemma}
\newtheorem{proposition}[theorem]{Proposition}
\theoremstyle{definition}
\newtheorem{remark}[theorem]{Remark}
\theoremstyle{definition}
\theoremstyle{definition}
\def\bR{\mathbb{R}}
\def\cC{\mathcal{C}}
\def\cM{\mathcal{M}}
\DeclareMathOperator{\osc}{osc}
\def\dashint{\operatorname%
{\,\,\text{\bf--}\kern-.98em\DOTSI\intop\ilimits@\!\!}}
\newcommand{\mysection}[1]{\section{#1}
\setcounter{equation}{0}}
\begin{document}
\title[Non-local elliptic equations]{On $L_p$-estimates for a class of non-local elliptic equations}

\author[H. Dong]{Hongjie Dong}
\address[H. Dong]{Division of Applied Mathematics, Brown University,
182 George Street, Providence, RI 02912, USA}
\email{Hongjie\_Dong@brown.edu}
\thanks{H. Dong was partially supported by the NSF under agreements DMS-0800129 and DMS-1056737.}

\author[D. Kim]{Doyoon Kim}
\address[D. Kim]{Department of Applied Mathematics, Kyung Hee University, 1 Seocheon-dong, Giheung-gu, Yongin-si, Gyeonggi-do 446-701, Republic of Korea}
\email{doyoonkim@khu.ac.kr}
\thanks{D. Kim was supported by Basic Science Research Program through the National Research Foundation of Korea (NRF) funded by the Ministry of Education, Science and Technology (2011-0013960).}

\subjclass[2010]{45K05,35B65,60J75}

\keywords{non-local elliptic equations, Bessel potential spaces, L\'evy processes, the martingale problem.}

\begin{abstract}
We consider non-local elliptic operators with kernel $K(y)=a(y)/|y|^{d+\sigma}$,
where $0 < \sigma < 2$ is a constant and $a$ is a bounded measurable function.
By using a purely analytic method, we prove the continuity of the non-local operator $L$ from the Bessel potential space $H^\sigma_p$ to $L_p$, and the unique strong solvability of the corresponding non-local elliptic equations in $L_p$ spaces. As a byproduct, we also obtain interior $L_p$-estimates.
The novelty of our results is that the function $a$ is not necessarily to be homogeneous, regular, or symmetric. An application of our result is the uniqueness for the martingale problem associated to the operator $L$.
\end{abstract}

\maketitle

\mysection{Introduction}
Non-local equations such
as integro-differential equations for jump L\'evy processes have
attracted the attention of many mathematicians.
These equations arise from models in physics,
engineering, and finance that involve long-range interactions (see, for instance, \cite{CT04}).
An example is the following non-local elliptic equation associated with pure jump process (see, for instance, \cite{MP92}):
\begin{equation}
                                \label{elliptic}
Lu-\lambda u=f\quad \text{in}\,\,\bR^d,
\end{equation}
where
\begin{equation}
                                    \label{eq23.22.24}
L u = \int_{\bR^d} \left(u(x+y) - u(x)-y\cdot \nabla u(x)\chi^{(\sigma)}(y)\right)K(x,y)\, d y,
\end{equation}
$$
\chi^{(\sigma)}\equiv 0\quad\text{for}\,\,\sigma\in (0,1),
\quad \chi^{(1)}=1_{y\in B_1},\quad
\chi^{(\sigma)}\equiv 1\quad\text{for}\,\,\sigma\in (1,2).
$$
In the above, $\lambda$ is a nonnegative constant and $K(x,y)$ is a positive kernel which has the following lower and upper bounds:
\begin{equation}
								\label{eq1211}	
(2-\sigma)\frac{\nu}{|y|^{d+\sigma}}
\le K(x,y) \le (2-\sigma)\frac{\Lambda}{|y|^{d+\sigma}},
\end{equation}
where $0<\nu\le \Lambda<\infty$ are two constants.

As is well known, if $K(x,y)=c^{-1}|y|^{-d-\sigma}$ with $c=c(d,\sigma)>0$ and $\sigma\in (0,2)$,
we get the fractional Laplace operator $-(-\Delta)^{\sigma/2}$, which has the symbol $- |\xi|^{\sigma}$.
In this case, the classical theory for pseudo-differential operators shows that, for any $\lambda > 0$ and $f \in L_p(\bR^d)$, $1<p<\infty$, there exists a unique solution $u \in H_p^{\sigma}(\bR^d)$ to the equation \eqref{elliptic} satisfying
$$
\|u\|_{H_p^\sigma(\bR^d)}
\le N(d,\sigma,\lambda, p) \|f\|_{L_p(\bR^d)};
$$
see Section \ref{sec1.2} for the definition of the Bessel potential space $H_p^\sigma(\bR^d)$.
In general, if the symbol of the operator is sufficiently smooth and its derivatives satisfy appropriate decays,
the aforementioned $L_p$-solvability is classical following from the Fourier multiplier theorems (see, for instance \cite{St70,Ja01,Ho}). It should be pointed out the $L_p$-solvability is also available if the kernel $K(y)$ is of the form $a(y)/|y|^{d+\sigma}$, and $a(y)$ is homogeneous of order zero and sufficiently smooth; see \cite{Ko84,MP92}.

In this paper, as a first step of our project, we extend this type of $L_p$-solvability to the equation \eqref{elliptic}\footnote{One can also consider the equation \eqref{elliptic} with $\chi^{(\sigma)} = 1_{y \in B_1}$ for all $\sigma \in (0,2)$. For a discussion about this case, see Remark \ref{remark1024}.} when the kernel $K$ is translation invariant with respect to $x$, i.e., $K(x,y)=K(y)$, merely measurable in $y$, and satisfies only the ellipticity condition \eqref{eq1211}. Moreover, if $\sigma=1$ we make a natural cancellation assumption on $K$; see \eqref{eq21.47}.
Note that the operator $L$ has the symbol
$$
m(\xi) = \int_{\bR^d} \left(e^{iy\cdot\xi } - 1-iy\cdot \xi\chi^{(\sigma)}(y) \right) K(y) \, dy,
$$
which generally lacks sufficient differentiability to apply the classical multiplier theorems.

There has been considerable work concerning regularity issues of solutions to non-local equations, such as the Harnack inequality, H\"{o}lder estimates, and non-local versions of the Aleksandrov--Bakelman--Pucci (ABP) estimate.
Firstly appeared approaches were probabilistic; see, for example, \cite{Bas02, Bas05_1, Bas05_2}.
Recently, analytic and PDE techniques have been used to study non-local equations with symmetric kernels in \cite{CS09, Ka09}, and with non-symmetric kernels in \cite{Si06, KL10, Ba11, Ba11b}.\footnote{The kernel $K$ is said to be symmetric if $K(y)=K(-y)$. In this case, $Lu$ can be written as
$$
Lu(x) = \frac12  \int_{\bR^d} \left(u(x+y) + u(x-y) -2 u(x)\right) K(y) \, d y.
$$
}
See also \cite{GS_10} for another ABP type estimate for a certain class of fully nonlinear non-local elliptic equations.

On the other hand, to the best of our knowledge,
little is known in the literature about the $L_p$-estimates of non-local operators if $K$ is only measurable and non-symmetric.
Our approach in this paper is purely analytic and uses techniques only from PDE, and does not use any multipliers or probabilistic representations of solutions.
We obtain a fully equipped $L_p$-estimate which enables us to get the desired $L_p$-solvability of the equation \eqref{elliptic} in the space $H_p^\sigma(\bR^d)$, $\sigma \in (0,2)$; see Theorem \ref{mainth01}.
We note that, in the symmetric case, a related $L_p$-estimate can be deduced from the main result in a fairly recent paper \cite{Ba07},
where a probabilistic approach is used to study Fourier multipliers.
To be precise, thanks to the symmetry of $K(y)$, applying Theorem 1 in \cite{Ba07} to the symbol
$$
M(\xi)=\frac{\int_{\bR^d}(\cos(\xi\cdot y)-1)a^{-1}(y)V(dy)}{\int_{\bR^d}(\cos(\xi\cdot y)-1)V(dy)},\quad V(dy)=K(y)\,dy,
$$
gives
\begin{equation*}
\|u\|_{\dot H_p^\sigma(\bR^d)}
\le N \|Lu\|_{L_p(\bR^d)};
\end{equation*}
see Section \ref{sec1.2} for the definition of the homogeneous space $\dot H_p^\sigma(\bR^d)$.

Our proof of $L_p$-estimates for non-local operators is founded on so-called {\em mean oscillation estimates} along with the Hardy--Littlewood maximal function theorem and the Fefferman--Stein theorem.
This method was used by N.V. Krylov in \cite{Krylov_2005} to treat second-order elliptic and parabolic equations with ${\rm VMO}_x$ coefficients (see also \cite{Iw83,DM93} for earlier work), and further developed in a series of papers including \cite{Krylov_2007} and \cite{DK09_01} for second-order and higher-order equations with rough coefficients.
In this paper, we adapt this method to study non-local operators.
One feature of the method is that it does not require a representation formula of solutions via fundamental solutions, which makes it possible to deal with  non-local operators with inhomogeneous and merely measurable kernels.
The key step in establishing the mean oscillation estimates of solutions is based on the following $C^{\alpha}$-estimate for the non-local equation $Lu - \lambda u = f$:
$$
[u]_{C^{\alpha}(B_{1/2})} \le N \int_{\bR^d}\frac{|u|}{1+|x|^{d+\sigma}}\,dx + N\osc_{B_1}|f|,
$$
with a constant $N$ which is independent of the size of $\lambda \ge 0$; cf. Corollary \ref{cor2.3}. This estimate is non-local in the sense that the local H\"older norm of the solution $u$
depends on $u$ itself in the whole space. For the proof, we use some ideas from \cite{Ba11}. To proceed from this H\"older estimate to the mean oscillation estimate of $u$, we make a crucial observation that the first term on the right-hand side above can be bounded by the maximal function of $u$ at the origin. We then use this idea to further estimate the mean oscillation of the fractional derivative $(-\Delta)^{\sigma/2}u$.

We remark that during the preparation of this paper we learned that Mikulevicius and Pragarauskas established $L_p$-estimates for non-local parabolic equations in \cite{MP10}, where they considered both stochastic local and non-local equations using probabilistic methods.
The ellipticity condition in \cite{MP10} is slightly more general than ours replacing $\nu$ in \eqref{eq1211} by a sufficiently smooth, positive, and homogeneous of order zero function, which can be degenerate on the whole space except on an arbitrarily narrow cone with vertex at zero (also see \cite{MP92}). However, for the strong solvability, the authors of \cite{MP10} appealed to the continuity estimate of $L$ proved in \cite{Ba07} and \cite{MP92}, which requires either the symmetry of $K$ or the homogeneity and sufficient smoothness of $K$. A direct consequence of our main result is the strong solvability of the stochastic non-local equations under considerably relaxed conditions; see Remark \ref{rem11.19}.

We state the main result, Theorem \ref{mainth01}, and its applications in the next section after we introduce a few necessary notation. The proof of Theorem \ref{mainth01} will be given in Section \ref{L_p}
after we prove an $L_2$-estimate in Section \ref{L_2},
a H\"older estimate in Section \ref{Holder}, and finally mean oscillation estimates in Section \ref{moe}. Section \ref{sec6} is devoted to several interior local estimates, which are deduced from the global estimate in Theorem \ref{mainth01}.

\mysection{Main result}

\subsection{Function spaces and notation}       \label{sec1.2}
For $p\in (1,\infty)$ and $\sigma>0$, we use $H_p^{\sigma}(\bR^d)$ to denote the Bessel potential space
\begin{align*}
H_p^\sigma(\bR^d)
= \{ u \in L_p(\bR^d): (1-\Delta)^{\sigma/2} u \in L_p(\bR^d) \},
\end{align*}
which is equipped with the norm
$$
\|u\|_{H_p^\sigma(\bR^d)}=\|(1-\Delta)^{\sigma/2} u\|_{L_p(\bR^d)}.
$$
The homogeneous space is denoted by\begin{align*}
\dot H_p^\sigma(\bR^d) 
= \{ u \in S'(\bR^d): (-\Delta)^{\sigma/2} u \in L_p(\bR^d) \},
\end{align*}
where $S'(\bR^d)$ is the space of tempered distributions. We use the semi-norm
$$
\|u\|_{\dot H_p^\sigma(\bR^d)}=\|(-\Delta)^{\sigma/2} u\|_{L_p(\bR^d)}.
$$
Note that by the inequalities
$$
N_1 (1+|\xi|^\sigma) \le (1+|\xi|^2)^{\sigma/2} \le N_2 (1+|\xi|^\sigma),
$$
we have
$$
\|u\|_{H_p^\sigma(\bR^d)} \approx \|u\|_{L_p(\bR^d)} + \|u\|_{\dot H_p^\sigma(\bR^d)}.
$$
Throughout the paper we omit $\bR^d$ in $C_0^\infty(\bR^d)$, $L_p(\bR^d)$, or $H_p^\sigma(\bR^d)$ whenever the omission is clear from the context.
We write $N(d,\nu,...)$ in the estimates to express that the constant $N$ is determined only by the parameters $d, \nu, ...$.

\subsection{Main theorem}

In addition to the ellipticity condition \eqref{eq1211}, in the case $\sigma=1$ we assume
\begin{equation}
                                    \label{eq21.47}
\int_{\partial B_r}yK(y)\, dS_r(y)=
0,\quad \forall r\in (0,\infty),
\end{equation}
where $dS_r$ is the surface measure on $\partial B_r$.
We remark that \eqref{eq21.47} is needed even for the continuity of $L$ from $H^\sigma_2$ to $L_2$; cf. Lemma \ref{thm1.2}. In particular, \eqref{eq21.47} is always satisfied for any symmetric kernels. It is worth noting that due to \eqref{eq21.47} the indicator function $\chi^{(1)}$ can be replaced by $1_{B_r}$ for any $r>0$.

Here is the main result of this paper.

\begin{theorem}[$L_p$-solvability]
								\label{mainth01}
Let $1< p < \infty$, $\lambda \ge 0$, and $0< \sigma < 2$. Assume that $K=K(y)$ satisfies \eqref{eq1211} and, if $\sigma=1$, $K$ also satisfies the condition \eqref{eq21.47}. Then $L$ defined in \eqref{eq23.22.24} is a continuous operator from $H_p^\sigma$ to $L_p$.
For $u \in H_p^{\sigma}$ and $f \in L_p$ satisfying
\begin{equation}
								\label{eq1220}
Lu - \lambda u = f	\quad\text{in}\,\,\bR^d,
\end{equation}
we have
\begin{equation}
                                \label{eq24.09.53}
\|u\|_{\dot H_p^\sigma} + \sqrt{\lambda}\|u\|_{\dot{H}^{\sigma/2}_p}
+ \lambda \|u\|_{L_p}
\le N \|f\|_{L_p},
\end{equation}
where $N=N(d,\nu,\Lambda,\sigma,p)$.
Moreover, for any $\lambda > 0$ and $f \in L_p$, there exists a unique strong solution $u \in H_p^{\sigma}$ of \eqref{eq1220}.
\end{theorem}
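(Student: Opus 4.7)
The plan is to prove Theorem \ref{mainth01} in three stages following the strategy announced in the introduction: boundedness of $L\colon H_p^\sigma\to L_p$, the a priori estimate \eqref{eq24.09.53}, and existence. For boundedness I would first obtain, for $u\in C_0^\infty$, a pointwise bound on $|Lu(x)|$ by splitting the defining integral at $|y|=1$. On $|y|\le 1$, Taylor's theorem controls the integrand by $\|D^2 u\|_\infty$ when $\sigma\in[1,2)$ (using the cancellation \eqref{eq21.47} at $\sigma=1$ to symmetrize away the drift term) or by $|y|\,\|\nabla u\|_\infty$ when $\sigma\in(0,1)$; on $|y|\ge 1$ the integrand is bounded by $\|u\|_\infty$. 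Combined with the $L_2$-continuity of Section \ref{L_2} and the a priori $L_p$-bound below, the estimate $\|Lu\|_{L_p}\le N\|u\|_{H_p^\sigma}$ extends from $C_0^\infty$ to $H_p^\sigma$ by density.

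For the a priori estimate, I would adapt Krylov's mean oscillation program. Fix a smooth $u$ with $Lu-\lambda u=f$, a ball $B_r(x_0)$, and decompose $f=f\mathbf 1_{B_r(x_0)}+f\mathbf 1_{B_r(x_0)^c}$; correspondingly write $u=v+w$ where $w$ solves $Lw-\lambda w=f\mathbf 1_{B_r(x_0)}$. Rescaling the H\"older estimate from Corollary \ref{cor2.3} yields
$$
[v]_{C^\alpha(B_{r/2}(x_0))}\le N r^{-\alpha}\Bigl(r^\sigma\!\int_{\bR^d}\frac{|v(x_0+ry)|}{1+|y|^{d+\sigma}}\,dy+r^\sigma\osc_{B_r(x_0)}(f-Lw+\lambda w)\Bigr),
$$
and the crucial observation is that the tail average is pointwise bounded by $\cM u(x_0)$, while $w$ can be handled globally on the scale $r$ using the $L_2$-estimate. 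Converting this into a mean oscillation bound of $(-\Delta)^{\sigma/2}u$ on $B_{r/4}(x_0)$, the Fefferman--Stein sharp function theorem together with the Hardy--Littlewood maximal function inequality delivers $\|(-\Delta)^{\sigma/2}u\|_{L_p}\le N\|f\|_{L_p}$ with $N$ independent of $\lambda\ge 0$. The lower order parts of \eqref{eq24.09.53} then follow: $\lambda\|u\|_{L_p}\le N\|f\|_{L_p}$ by rescaling to the natural length $\lambda^{-1/\sigma}$ and absorbing, while $\sqrt\lambda\,\|u\|_{\dot H^{\sigma/2}_p}$ comes from geometric interpolation between the $\dot H^\sigma_p$- and $L_p$-bounds.

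Uniqueness of strong solutions follows from \eqref{eq24.09.53} applied to the difference of two solutions. For existence I would run the method of continuity along $L_t=(1-t)\bigl(-(-\Delta)^{\sigma/2}\bigr)+tL$, $t\in[0,1]$. Each $L_t$ has a kernel of the form $a_t(y)/|y|^{d+\sigma}$ satisfying \eqref{eq1211} with constants independent of $t$, and also \eqref{eq21.47} when $\sigma=1$, so the a priori estimate \eqref{eq24.09.53} holds uniformly in $t$. At $t=0$ the equation $-(-\Delta)^{\sigma/2}u-\lambda u=f$ is uniquely solvable in $H_p^\sigma$ by classical Fourier multiplier theory, and the open-closed argument transfers solvability to $t=1$. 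The main obstacle I anticipate is the mean oscillation step itself: one must extract from a purely local H\"older estimate that is non-local on its right-hand side a genuine BMO-type bound for the non-local derivative $(-\Delta)^{\sigma/2}u$, and simultaneously ensure that the constants remain independent of $\lambda\ge 0$ so that Fefferman--Stein can be invoked uniformly and existence for all $\lambda>0$ follows.
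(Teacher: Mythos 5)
Your overall architecture for the a priori estimate (split $f$, solve for $w$ by the $L_2$-theory, apply the rescaled H\"older estimate to $v=u-w$ with the tail controlled by $\cM u(x_0)$, then Fefferman--Stein and Hardy--Littlewood) is exactly the paper's route, and the method of continuity for existence is a legitimate alternative to the paper's density argument (the paper itself remarks this). However, there are genuine gaps. The most serious one is the continuity of $L\colon H_p^\sigma\to L_p$, which is part of the statement and is also indispensable for your own plan: it is needed to pass from $C_0^\infty$ to general $u\in H_p^\sigma$ in \eqref{eq24.09.53} and to run the method of continuity. Your Taylor-splitting bounds $|Lu(x)|$ by $\|D^2u\|_\infty$, $\|\nabla u\|_\infty$, $\|u\|_\infty$, which does not give $\|Lu\|_{L_p}\le N\|u\|_{H_p^\sigma}$, and the a priori estimate \eqref{eq24.09.53} goes in the opposite direction ($\|u\|_{\dot H_p^\sigma}\le N\|Lu-\lambda u\|_{L_p}$), so invoking it together with the $L_2$-continuity and density is circular: a density argument requires the quantitative continuity estimate on $C_0^\infty$ beforehand. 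The paper proves it by a separate mean oscillation estimate with the roles of $L$ and $-(-\Delta)^{\sigma/2}$ interchanged (Lemma \ref{lem3.4}): for solutions of $-(-\Delta)^{\sigma/2}u-\lambda u=f$ one estimates the oscillation of $Lu$, which after the sharp-function argument yields $\|Lu\|_{L_p}\le N\|u\|_{\dot H_p^\sigma}$ for $p>2$, and then by duality (the adjoint $L^*$ has kernel $K(-y)$) for $p\in(1,2)$.

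Three further points. First, your sharp-function step only works for $p>2$: the contribution of $w$ is controlled through the $L_2$-estimate by $\big(\cM(f^2)(x_0)\big)^{1/2}$, and bounding $\|(\cM(f^2))^{1/2}\|_{L_p}$ by $\|f\|_{L_p}$ requires $p/2>1$; you never address $1<p<2$, which the paper handles by duality using the solvability for $q=p/(p-1)>2$. Second, absorption: with your fixed-ratio decomposition (data cut off on $B_r(x_0)$, oscillation measured on $B_{r/4}(x_0)$) the maximal-function terms come with a constant $N$ that is not small, while $\|\cM((-\Delta)^{\sigma/2}u)\|_{L_p}\ge\|(-\Delta)^{\sigma/2}u\|_{L_p}$, so nothing can be absorbed after Fefferman--Stein; one needs the two-scale structure of Proposition \ref{prop3.3}, where $f$ vanishes on $B_{2\kappa r}$ but the oscillation is taken over $B_r$, producing the factor $\kappa^{-\alpha}$ with $\kappa\ge 2$ free to be chosen large. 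Third, $\lambda\|u\|_{L_p}\le N\|f\|_{L_p}$ does not follow from rescaling to length $\lambda^{-1/\sigma}$ and absorbing: after normalizing $\lambda$ to $1$, the term $\|u\|_{L_p}$ reappears on the right with a non-small constant. The paper instead carries $\lambda\big(|u-(u)_{B_r}|\big)_{B_r}$ through the mean oscillation estimate itself, so that the $\lambda$-term is absorbed simultaneously with the $(-\Delta)^{\sigma/2}u$ term; your interpolation for the $\sqrt{\lambda}\,\|u\|_{\dot H^{\sigma/2}_p}$ term is fine once that is done.
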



\begin{remark}
Upon using the embedding $C^0\subset H^\sigma_p$ for $p>d/\sigma$, Theorem \ref{mainth01} implies a new uniqueness result for the martingale problem associated with the L\'evy type operator $L$; see, for instance, \cite{Ko84}. For other results about the martingale problem for pure jump processes, we refer the reader to \cite{Ko84b,NT89,MP93,AK09} and the references therein.
\end{remark}

\begin{remark}
For the sake of brevity, in this paper we do not present the precise dependence of the constant $N$ in \eqref{eq24.09.53} on the regularity parameter $\sigma$.
Nevertheless, by keeping track of the constants we find that, if $\sigma \in [\sigma_0, 2)$, where $\sigma_0 \in (0,2)$,
in the symmetric case the constant $N$ in the estimate \eqref{eq24.09.53} depends on $\sigma_0$, not $\sigma$.
In the non-symmetric case, if $0 < \sigma_0 \le \sigma \le \sigma_1 < 1$
or $1 < \sigma_0 \le \sigma < 2$, then the constant $N$ depends on $\sigma_0$ (and $\sigma_1$), not $\sigma$. In particular, $N$ does not blow up as $\sigma$ approaches $2$. A similar fact is observed in the study of local regularities of non-local equations in \cite{CS09}.
\end{remark}

\begin{remark}
                                    \label{rem11.19}
One noteworthy result in Theorem \ref{mainth01} is the continuity of the operator $L$ from $H_p^\sigma$ to $L_p$.
One can see from the proofs below that for this continuity the lower bound in the ellipticity condition \eqref{eq1211} is not needed.
This implies that the operators in \cite{MP10} are continuous from $H_p^\sigma$ to $L_p$ under Assumption A \cite{MP10} and the cancellation condition \eqref{eq21.47} in the case $\sigma=1$.
On the other hand, in \cite{MP10} it is shown that weak solutions are strong solutions if the operators are continuous.
Therefore, the weak solutions obtained in \cite{MP10} are indeed strong solutions (under the additional cancellation condition \eqref{eq21.47} when $\sigma=1$).
\end{remark}

A natural question is whether the result in Theorem \ref{mainth01} can be extended to equations with translation-variant kernels of the form $K(x,y) = a(x,y) |y|^{-d-\sigma}$, under natural conditions on $K$, say $K$ satisfies the assumptions above and $a$ is uniformly continuous (or smooth) with respect to $x$. Recall that the classical $L_p$-theory for second-order equations with uniformly continuous coefficients is built upon the estimates for equations with constant coefficients by using a standard perturbation argument and a partition of unity technique. However, for the non-local operator \eqref{eq23.22.24}, such a perturbation method seems to be out of reach. We note that estimates of this type were obtained in \cite{MP92} by using the Calder\'{o}n--Zygmund approach when the function $a(x,y)$ is homogeneous in $y$ of order zero and (some higher order) derivatives of $a(x,y)$ in $y$ are uniformly continuous in $x$. The $L_p$-estimate in the translation-variant case remains to be a challenging problem if $a(x,y)$ is inhomogeneous and merely measurable with respect to $y$.

\begin{remark}							\label{remark1024}
In our main theorem (Theorem \ref{mainth01}), we consider the operator $L$ in \eqref{eq23.22.24} with three different $\chi^{(\sigma)}$ depending on the range of $\sigma$. In this remark, we discuss the solvability in the unified case $\chi^{(\sigma)} = 1_{y \in B_1}$ for all $\sigma \in (0,2)$, which is also of interest from the probabilistic point of view.
Upon setting
\begin{equation}
                                    \label{eq23.22.24b}
\tilde L u = \int_{\bR^d} \left(u(x+y) - u(x)-y\cdot \nabla u(x)1_{y\in B_1}\right)K(y)\, d y,
\end{equation}
we observe that
\begin{equation}
                                    \label{eq10.28}
\tilde Lu=Lu+b\cdot \nabla u,
\end{equation}
where
$$
b=-\int_{B_1}yK(y)\,dy\quad \text{if}\,\,\sigma\in (0,1),
\quad b=\int_{\bR^d\setminus B_1}yK(y)\,dy\quad \text{if}\,\,\sigma\in (1,2).
$$
Then the unique solvability in $H_p^\sigma$ of $\tilde{L} u - \lambda u = f$ follows from that of the equation
\begin{equation}
								\label{eq8.56}
Lu +b\cdot \nabla u- \lambda u = f	\quad\text{in}\,\,\bR^d,
\end{equation}
where $b=(b_1,\ldots,b_d)$ is a constant vector.
For the equation \eqref{eq8.56}, as in the proof of Theorem \ref{mainth01}, it suffices to prove the following estimate for $u \in C_0^{\infty}$ satisfying \eqref{eq8.56}:
\begin{equation}
                                \label{eq8.57}
\|u\|_{\dot H_p^\sigma} + \sqrt{\lambda}\|u\|_{\dot{H}^{\sigma/2}_p}
+ \lambda \|u\|_{L_p}+\|b\cdot \nabla u\|_{L_p}
\le N \|f\|_{L_p},
\end{equation}
where $N = N(d,\nu,\Lambda, \sigma, p)$.
This estimate is proved using the results in \cite{MP10} combined with the continuity of the operator $L$ from $H_p^\sigma$ to $L_p$ proved in Theorem \ref{mainth01}.
For the reader's convenience, we present a proof at the end of Section \ref{L_p}. We note that, because of \eqref{eq10.28}, in general $\tilde L$ defined in \eqref{eq23.22.24b} is not a continuous operator from $H^\sigma_p$ to $L_p$ when $\sigma\in (0,1)$.
\end{remark}

\mysection{$L_2$-estimate}							\label{L_2}

To investigate the $L_p$-solvability of the equation $\eqref{eq1220}$, we first study an $L_2$-estimate. Recall that
$$
-(-\Delta)^{\sigma/2}u(x) = \frac 1 c \text{P.V.} \int_{\bR^d}\left(u(x+y)-u(x)\right)\frac{dy}{|y|^{d+\sigma}},
$$
where
\begin{equation}
								\label{eq1222}
c= c(d,\sigma)
= \frac{\pi^{d/2}2^{2-\sigma}}{\sigma(2-\sigma)}
\frac{\Gamma\left(2-\frac{\sigma}{2}\right)}{\Gamma
\left(\frac{d+\sigma}{2}\right)}.
\end{equation}
Here $\Gamma$ is the Gamma function.
Throughout the paper we always assume that $K=K(y)$ satisfies \eqref{eq1211} and, if $\sigma=1$, $K$ also satisfies the condition \eqref{eq21.47}.

\begin{lemma}
                        \label{thm1.2}
The operator $L$ defined in \eqref{eq23.22.24} is continuous from $H_2^\sigma$ to $L_2$. Let $\lambda\ge 0$ be a constant and $u \in H^\sigma_2$ satisfy
$$
Lu - \lambda u = f\quad \text{in}\,\,\bR^d,
$$
where $f \in L_2(\bR^d)$.
Then we have
\begin{equation}
                    \label{eq24.08.55}
\|u\|_{\dot H_2^\sigma} + \sqrt{\lambda}\|u\|_{\dot{H}^{\sigma/2}_2}
+ \lambda \|u\|_{L_2}
\le N(d,\nu) \|f\|_{L_2}.
\end{equation}
\end{lemma}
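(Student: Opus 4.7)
My plan is to use Fourier analysis in $L_2$, which is the natural tool in this setting. Since $L$ is translation-invariant it acts as a Fourier multiplier with symbol
\[
m(\xi) = \int_{\bR^d}\bigl(e^{iy\cdot\xi} - 1 - iy\cdot \xi\,\chi^{(\sigma)}(y)\bigr)K(y)\,dy,
\]
and the whole lemma reduces to establishing the two-sided symbol bound
\[
c_1(d,\nu)|\xi|^\sigma \;\le\; -\operatorname{Re} m(\xi) \;\le\; |m(\xi)| \;\le\; c_2(d,\Lambda,\sigma)|\xi|^\sigma.
\]

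For the upper bound, I would split the integration into $\{|y|<1/|\xi|\}$ and its complement, and combine the elementary estimates $|e^{it}-1|\le \min(2,|t|)$ (used when $\sigma\in(0,1)$) and $|e^{it}-1-it|\le \min(2+|t|,|t|^2/2)$ (used when $\sigma\in(1,2)$) with the upper ellipticity bound $K(y)\le (2-\sigma)\Lambda |y|^{-d-\sigma}$; a direct computation then yields $|m(\xi)|\le c_2|\xi|^\sigma$. The borderline case $\sigma=1$ is handled using the cancellation condition \eqref{eq21.47}, which allows the linear correction to be absorbed on any ball, reducing the estimate to the same scheme. For the real-part lower bound, since $iy\cdot\xi\,\chi^{(\sigma)}(y)\,K(y)$ is purely imaginary,
\[
-\operatorname{Re} m(\xi) = \int_{\bR^d}\bigl(1-\cos(y\cdot\xi)\bigr)K(y)\,dy,
\]
and combining the lower ellipticity bound $K(y)\ge(2-\sigma)\nu|y|^{-d-\sigma}$ with the classical identity $\int_{\bR^d}(1-\cos(y\cdot\xi))|y|^{-d-\sigma}\,dy = c(d,\sigma)^{-1}|\xi|^\sigma$, where $c(d,\sigma)$ is the constant in \eqref{eq1222}, gives $-\operatorname{Re} m(\xi) \ge c_1(d,\nu)|\xi|^\sigma$.

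Given the symbol bound, continuity of $L:H_2^\sigma \to L_2$ is immediate from Plancherel's theorem because $|m(\xi)| \le c_2(1+|\xi|^2)^{\sigma/2}$. For the resolvent estimate, Fourier transforming $Lu - \lambda u = f$ yields $(m(\xi)-\lambda)\hat u(\xi) = \hat f(\xi)$; as $\operatorname{Re}(m(\xi)-\lambda)\le -(c_1|\xi|^\sigma+\lambda)$, we obtain $|m(\xi)-\lambda|\ge c_1|\xi|^\sigma+\lambda$. Multiplying $\hat u = \hat f/(m(\xi)-\lambda)$ by $|\xi|^\sigma$, $\sqrt{\lambda}\,|\xi|^{\sigma/2}$, and $\lambda$ respectively, and using $c_1|\xi|^\sigma+\lambda \ge 2\sqrt{c_1\lambda}\,|\xi|^{\sigma/2}$ for the middle factor, gives a pointwise bound of each of the three Fourier-side quantities by a constant times $|\hat f(\xi)|$, and Plancherel finishes the proof of \eqref{eq24.08.55}. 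The main point where care is required is the borderline case $\sigma=1$: without \eqref{eq21.47} the integral defining $m(\xi)$ is only conditionally convergent near the origin and the symbol estimate would fail, which is precisely the reason for the cancellation assumption.
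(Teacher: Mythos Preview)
Your proposal is correct and follows essentially the same Fourier-multiplier route as the paper: both establish the lower bound $-\operatorname{Re}m(\xi)\ge c_1(d,\nu)|\xi|^\sigma$ via the ellipticity lower bound and the explicit constant \eqref{eq1222} (minor slip: the identity gives $c(d,\sigma)|\xi|^\sigma$, not $c(d,\sigma)^{-1}|\xi|^\sigma$), and the upper bound $|m(\xi)|\le c_2|\xi|^\sigma$ by the same scaling argument, invoking \eqref{eq21.47} when $\sigma=1$. The only cosmetic difference is in the last step: the paper expands $\|Lu-\lambda u\|_{L_2}^2=\|Lu\|_{L_2}^2-2\lambda\int uLu+\lambda^2\|u\|_{L_2}^2$ and bounds each nonnegative summand by $\|f\|_{L_2}^2$, whereas you divide pointwise and use $|m(\xi)-\lambda|\ge c_1|\xi|^\sigma+\lambda$ together with AM--GM; the two computations are equivalent and give the same constant depending only on $d,\nu$.
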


\begin{proof}
We first consider the case $u\in C_0^\infty$.
By taking the Fourier transform of \eqref{eq23.22.24},
we have
$$
\widehat{Lu}(\xi) = \hat{u}(\xi)\int_{\bR^d} \left( e^{i\xi \cdot y} - 1 -iy\cdot\xi \chi^{(\sigma)}(y)\right) K(y) \, dy.
$$
Then
\begin{align*}
\int_{\bR^d} |Lu|^2 \, dx
&= \int_{\bR^d} |\widehat{Lu}(\xi)|^2 \, d \xi\\
&= \int_{\bR^{d}}  |\hat u(\xi)|^2 \Big|\int_{\bR^d}\left( e^{i\xi \cdot y} - 1 -iy\cdot\xi \chi^{(\sigma)}(y)\right) K(y) \, dy\Big|^2\, d\xi\\
&\ge \int_{\bR^{d}}  |\hat u(\xi)|^2 \Big|\Re \int_{\bR^d}\left( e^{i\xi \cdot y} - 1 -iy\cdot\xi \chi^{(\sigma)}(y)\right) K(y) \, dy\Big|^2\, d\xi\\
&= \int_{\bR^{d}}  |\hat u(\xi)|^2 \Big( \int_{\bR^d}\left(1-\cos(\xi \cdot y) \right) K(y) \, dy\Big)^2\, d\xi\\
&\ge (2-\sigma)^2 \nu^2 \int_{\bR^{d}}  |\hat u(\xi)|^2 \Big( \int_{\bR^d}\left(1-\cos(\xi \cdot y) \right) |y|^{-d-\sigma} \, dy\Big)^2\, d\xi\\
&= \nu^2c^2(2-\sigma)^2\int_{\bR^d} |(-\Delta)^{\sigma/2}u|^2\,dx,
\end{align*}
where $c$ is from \eqref{eq1222}.
Here we used the lower bound in \eqref{eq1211} and the fact that $1-\cos(\xi \cdot y)$ is non-negative.
Note that, for $\sigma \in (0,2)$, there exists $N=N(d)$ such that
$$
c(2-\sigma) = \pi^{d/2} \frac{2^{2-\sigma}}{\sigma}\frac{\Gamma(2-\frac{\sigma}{2})}{\Gamma(\frac{d+\sigma}2)}\ge N(d).
$$
Hence it follows that
\begin{equation}
								\label{eq0101}
\int |Lu(x)|^2 \, dx \ge N(d,\nu)\|u\|_{\dot{H}_2^{\sigma}}^2.
\end{equation}
Similarly,
\begin{align}
-\int_{\bR^d} uLu \, dx
&= -\int_{\bR^d} \widehat{Lu}(\xi)\overline{\hat u(\xi)} \, d \xi\nonumber\\
&= -\int_{\bR^{d}}  |\hat u(\xi)|^2 \int_{\bR^d}\left( e^{i\xi \cdot y} - 1 -iy\cdot\xi \chi^{(\sigma)}(y)\right) K(y) \, dy\, d\xi\nonumber\\
&= -\int_{\bR^{d}}  |\hat u(\xi)|^2 \Big(\Re \int_{\bR^d}\left( e^{i\xi \cdot y} - 1 -iy\cdot\xi \chi^{(\sigma)}(y)\right) K(y) \, dy\Big)\, d\xi\nonumber\\
&= \int_{\bR^{d}}  |\hat u(\xi)|^2 \int_{\bR^d}\left(1-\cos(\xi \cdot y) \right) K(y) \, dy\, d\xi\nonumber\\
&\ge (2-\sigma) \nu \int_{\bR^{d}}  |\hat u(\xi)|^2 \int_{\bR^d}\left(1-\cos(\xi \cdot y) \right) |y|^{-d-\sigma} \, dy\, d\xi\nonumber\\
                                \label{eq22.23.02}
&= \nu c (2-\sigma)\int_{\bR^d} |(-\Delta)^{\sigma/4}u|^2\,dx.
\end{align}
From the equality
$$
\int|Lu - \lambda u|^2\, dx = \int |f|^2 \, dx,
$$
we finally obtain the estimate \eqref{eq24.08.55} for $u\in C_0^\infty$ by collecting \eqref{eq0101} and \eqref{eq22.23.02}.

For the general case, we need to show the continuity of $L$. The symbol of $L$ is given by
$$
m(\xi)=\int_{\bR^d}\left(e^{iy\cdot \xi}-1-i y\cdot \xi(1_{\sigma\in (1,2)}+1_{y\in B_1}1_{\sigma=1})\right)K(y)\,dy.
$$
Clearly, $m(0)=0$. In the sequel, we assume $\xi\neq 0$.
By using the upper bound of $K$ in \eqref{eq1211} and the change of variable $y\to y/|\xi|$, it is easily seen that for $\sigma\in (0,1)$ or $\sigma\in (1,2)$, we have $|m(\xi)|\le N(d,\sigma,\Lambda)|\xi|^\sigma$. If $\sigma=1$, from \eqref{eq21.47} we get
$$
m(\xi)=\int_{\bR^d}\left(e^{iy\cdot \xi}-1-i y\cdot \xi 1_{|\xi| y\in B_1}\right)K(y)\,dy,
$$
which gives $|m(\xi)|\le N|\xi|$ by using the same argument. Therefore, in any case we have
\begin{equation}
                                        \label{eq0101z}
\|Lu\|_{L_2}=\|\hat u(\xi) m(\xi)\|_{L_2}\le N\|\hat u(\xi) |\xi|^\sigma\|_{L_2}
\le N \|u\|_{\dot H_2^\sigma},
\end{equation}
which implies that $L$ is a continuous operator from $H_2^\sigma$ to $L_2$.
To prove the estimate \eqref{eq24.08.55} for general $u\in H^\sigma_p$, we use the fact that $C_0^\infty$ is dense in $H_2^\sigma$ and the continuity of the operator $L-\lambda$ from $H_2^\sigma$ to $L_2$. This completes the proof of the lemma.
\end{proof}

\begin{remark}
We note that the proofs of \eqref{eq0101} and \eqref{eq22.23.02} do not use the cancellation condition when $\sigma=1$. These inequalities can also be verified without using the Fourier transform. Indeed, \eqref{eq22.23.02} follows from the identity
$$
-2\int u Lu\,dx=\int\int\big(u(x+y)-u(x)\big)^2K(y)\,dy\,dx
$$
and the ellipticity condition \eqref{eq1211}. For \eqref{eq0101}, we decompose $K$ into its symmetric and skew-symmetric parts $K=K_e+K_o$, where
$$
K_e(y)=\frac 1 2(K(y)+K(-y)),\quad K_o(y)=\frac 1 2(K(y)-K(-y)).
$$
Clearly, $K_e$ satisfies \eqref{eq1211}.
Let $L_e$ and $L_o$ be the corresponding operators with kernels $K_e$ and $K_o$, respectively. It is easily seen that
$$\int L_e u L_o u\,dx=0.
$$
Therefore, we have
$$
\int |Lu|^2\,dx\ge \int |L_e u|^2\,dx:=I.
$$
Since $K_e$ is symmetric,
\begin{equation*}
I= \iiint
\left(u(x+y)-u(x)\right)\left(u(x+z)-u(x)\right)K_e(y)K_e(z)\, dy \, dz \, dx.
\end{equation*}
Using the change of variables $y\to -y$ and $x \to x+y$, we get
$$
I = \iiint
\left(u(x)-u(x+y)\right)\left(u(x+y+z)-u(x+y)\right)K_e(y)K_e(z)\, dy \, dz \, dx.
$$
Adding the above two expressions of $I$ gives
\begin{align*}
2I
&= \iiint
\left(u(x)-u(x+y)\right)\left(u(x+y+z)-u(x+y)-u(x+z)+u(x)\right)\\
&\quad \cdot K_e(y)K_e(z)\, dy \, dz\, dx .
\end{align*}
Now we use the change of variables $z \to -z$ and $x \to x + z$ to obtain another expression of $2I$, which is the same as above with $u(x+y+z)-u(x+z)$
in place of $u(x)-u(x+y)$.
By adding these two expressions of $2I$, we finally reach
\begin{align*}
&4\int |Lu(x)|^2 \, dx\\
&= \iiint
\left(u(x+y+z)-u(x+y)-u(x+z)+u(x)\right)^2 K_e(y)K_e(z)\, dy \, dz \, dx,
\end{align*}
which along with \eqref{eq1211} gives \eqref{eq0101}.
\end{remark}

For the solvability result, we present the following two lemmas, which are versions of those in \cite[Chap. 1]{Krylov:book:2008} for non-local operators.
For later references, the operator in these lemmas is a bit more general than that in Theorem \ref{mainth01} having a drift term $b \cdot \nabla u$.
The first lemma is a maximum principle.

\begin{lemma}[A maximum principle]
                                        \label{lem2.2}
Let $\lambda>0$ be a constant, $b=(b_1,\ldots,b_d)$ be a bounded measurable function in $\bR^d$, and $u$ be a smooth function in $\bR^d$ satisfying $u(x)\to 0$ as $x\to \infty$. Assume that $Lu+b\cdot \nabla u-\lambda u=0$ in $\bR^d$. Then $u\equiv 0$ in $\bR^d$.
\end{lemma}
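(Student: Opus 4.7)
The plan is to run a standard pointwise maximum principle argument at an extreme point of $u$, the only care needed being a brief verification that $Lu$ is well-defined pointwise.

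First I would reduce to showing the conclusion under the assumption $\sup_{\bR^d} u > 0$: once this is done, applying the same result to $-u$ (which satisfies the same type of equation with $b$ unchanged, since the equation is linear) rules out $\inf u < 0$, giving $u \equiv 0$. So assume for contradiction $M := \sup u > 0$. Since $u$ is continuous and $u(x) \to 0$ as $|x| \to \infty$, there exists $R$ such that $|u| < M/2$ outside $B_R$, and by compactness $u$ attains $M$ at some point $x_0 \in \overline{B_R}$.

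At $x_0$, smoothness yields $\nabla u(x_0) = 0$, so in all three ranges of $\sigma$ the drift-correction term $y \cdot \nabla u(x_0)\chi^{(\sigma)}(y)$ vanishes, and
\begin{equation*}
Lu(x_0) = \int_{\bR^d}\bigl(u(x_0+y) - u(x_0)\bigr) K(y)\,dy.
\end{equation*}
The integral makes classical sense: for $|y| \le 1$, Taylor expansion together with $\nabla u(x_0) = 0$ gives $|u(x_0+y)-u(x_0)| \le N(u)|y|^2$, which is integrable against the upper bound $\Lambda(2-\sigma)|y|^{-d-\sigma}$ because $\sigma < 2$; for $|y| \ge 1$, the integrand is bounded by $2\|u\|_{L_\infty}$, integrable against $|y|^{-d-\sigma}$. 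Since $x_0$ is a global maximum, the integrand is pointwise nonpositive, hence $Lu(x_0) \le 0$. Combined with $b(x_0)\cdot\nabla u(x_0) = 0$ and $-\lambda u(x_0) < 0$, we obtain
\begin{equation*}
Lu(x_0) + b(x_0)\cdot\nabla u(x_0) - \lambda u(x_0) < 0,
\end{equation*}
contradicting the equation at $x_0$.

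There is no real obstacle here; the only subtle point is verifying pointwise integrability of $Lu$ at the extremum (where the vanishing of $\nabla u$ makes the $\chi^{(\sigma)}$ correction invisible), which is why the lemma is stated for smooth $u$ vanishing at infinity rather than for general $H^\sigma_2$ solutions. The cancellation condition \eqref{eq21.47} plays no role in this lemma.
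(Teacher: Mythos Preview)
Your proof is correct and follows essentially the same approach as the paper: locate a global maximum $x_0$ (guaranteed by decay at infinity), use $\nabla u(x_0)=0$ to kill the drift and the $\chi^{(\sigma)}$ correction, observe $Lu(x_0)\le 0$ from the integral representation, and derive a contradiction with $\lambda u(x_0)>0$. Your version is in fact more explicit than the paper's, which simply asserts that $Lu(x_0)\le 0$ is ``easily seen'' from the definition of $L$; your verification of pointwise integrability at the extremum is a nice addition.
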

\begin{proof}
We prove the lemma by contradiction. Suppose that $\sup_{\bR^d}u>0$. Since $u$ tends to $0$ as $x\to \infty$, we can find $x_0\in \bR^d$ such that $u(x_0)=\sup_{\bR^d}u$. Then from \eqref{eq23.22.24}, it is easily seen that $Lu(x_0)\le 0$. This together with $u(x_0)>0$ and $\nabla u(x_0)=0$ gives $Lu-\lambda u<0$ at $x_0$, which contradicts the assumption in the lemma. Therefore, we must have $\sup_{\bR^d}u\le 0$. Similarly, $\inf_{\bR^d}u\ge 0$. This completes the proof of the lemma.
\end{proof}

\begin{lemma}
                                        \label{lem2.3}
Let $\lambda>0$ be a constant and $b=(b_1,\ldots,b_d)$ be a constant vector in $\bR^d$. Then the set $(L+b\cdot \nabla-\lambda)C_0^\infty$ is dense in $L_p$ for any $p\in (1,\infty)$.
\end{lemma}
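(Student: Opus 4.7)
The plan is to combine Hahn--Banach duality with mollification and the maximum principle from Lemma \ref{lem2.2}. By duality, it suffices to show that any $g\in L_q$ (with $q=p/(p-1)$) satisfying $\int g\,(L+b\cdot\nabla-\lambda)\phi\,dx=0$ for every $\phi\in C_0^\infty$ must vanish. Computing the formal adjoint of $L$ via the change of variable $y\mapsto -y$ and using the evenness of $\chi^{(\sigma)}$, together with integration by parts for the constant drift, this condition is equivalent to the distributional identity
$$
L^* g - b\cdot\nabla g - \lambda g=0\quad\text{in }\bR^d,
$$
where $L^*$ is the non-local operator with kernel $K^*(y):=K(-y)$. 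The bounds in \eqref{eq1211} and the cancellation condition \eqref{eq21.47} are both invariant under $y\mapsto -y$, so $K^*$ satisfies the same hypotheses as $K$.

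I would then mollify: let $\rho_\epsilon$ be a standard mollifier and set $g_\epsilon:=g*\rho_\epsilon$. Since $L^*-b\cdot\nabla-\lambda$ is translation invariant it commutes with convolution, so $g_\epsilon\in C^\infty$ satisfies the same equation pointwise. Young's inequality gives $\|g_\epsilon\|_{L_q}\le\|g\|_{L_q}$, while $\|g_\epsilon\|_{L_\infty}$ and $\|\nabla g_\epsilon\|_{L_\infty}$ are finite by H\"older's inequality applied to $g_\epsilon=g*\rho_\epsilon$; in particular $g_\epsilon$ is bounded and uniformly continuous on $\bR^d$.

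The key step is the observation that a uniformly continuous function in $L_q(\bR^d)$ with $q<\infty$ must tend to $0$ at infinity: otherwise, uniform continuity would let one extract a sequence of disjoint balls of fixed positive radius on each of which $|g_\epsilon|$ exceeds a positive constant, contradicting $\|g_\epsilon\|_{L_q}<\infty$. Hence $g_\epsilon(x)\to 0$ as $|x|\to\infty$, and Lemma \ref{lem2.2} (whose proof works verbatim with $L$ replaced by $L^*$ and $b$ by $-b$, since $L^*$ satisfies the same ellipticity) forces $g_\epsilon\equiv 0$. Letting $\epsilon\to 0$ in $L_q$ then gives $g=0$.

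The main technical point to verify is the commutation of $L^*$ with convolution in the distributional sense, which is routine given the translation invariance of the operator; the conceptual content of the argument lies in the decay-at-infinity observation above, which bridges the mere $L_q$-membership of $g$ and the decay hypothesis required by Lemma \ref{lem2.2}.
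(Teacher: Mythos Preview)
Your argument is correct and follows essentially the same route as the paper: Hahn--Banach duality to produce an annihilator $g\in L_q$, convolution to upgrade $g$ to a smooth function satisfying the adjoint equation, decay at infinity, and then Lemma~\ref{lem2.2}. The paper convolves $g$ with an arbitrary $u\in C_0^\infty$ (so that $u*g\equiv 0$ for all such $u$ immediately forces $g=0$) rather than with a mollifier family followed by a limit, and it simply asserts the decay of $u*g$ at infinity rather than deriving it from uniform continuity; but these are cosmetic differences, not a genuinely different strategy.
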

\begin{proof}
Assume the assertion is not true. Then by the Hahn-Banach theorem and Riesz's representation theorem, there is a nonzero function $g\in L_{p/(p-1)}$ such that
\begin{equation}
							\label{eq1025}
\int_{\bR^d}(Lu(x)+b\cdot \nabla u(x)-\lambda u(x))g(x)\,dx=0
\end{equation}
for any $u\in C_0^\infty$.
Let $L^*$ be the non-local operator $L$ with $K(y)$ replaced by $K(-y)$. Then
we see that, for each $y \in \bR^d$,
\begin{multline*}
L^* (u*g)(y)-b\cdot \nabla (u*g)(y)-\lambda u*g(y)
\\
= \int_{\bR^d}(Lv(x)+b\cdot \nabla v(x)-\lambda v(x))g(x)\,dx=0,
\end{multline*}
where $v(x)=u(y-x) \in C_0^\infty$ and the last equality is due to \eqref{eq1025} with $v$ in place of $u$.
Because $u\in C_0^\infty$ and $g\in L_{p/(p-1)}$, the function $u*g(y)$ is smooth and tends to zero as $y\to \infty$. By Lemma \ref{lem2.2} applied to the operator $L^* - b \cdot \nabla - \lambda$, we get that $u*g\equiv 0$ in $\bR^d$. Bearing in mind that $u\in C_0^\infty$ is arbitrary, we conclude $g\equiv 0$ in $\bR^d$, which contradicts our assumption that $g$ is a nonzero function. The lemma is proved.
\end{proof}

Now we are ready to prove the following solvability result.

\begin{proposition}[$L_2$-solvability]
                        \label{prop2.5}
For any $\lambda > 0$ and $f \in L_2$, there exists a unique strong solution $u \in H_2^{\sigma}$
to $Lu - \lambda u =f$ in $\bR^d$ satisfying \eqref{eq24.08.55}.
\end{proposition}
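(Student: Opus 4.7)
The strategy is a standard a priori estimate plus density argument: Lemma \ref{thm1.2} already provides both the continuity of $L:H_2^\sigma\to L_2$ and the a priori estimate \eqref{eq24.08.55}, while Lemma \ref{lem2.3} (applied with $b=0$) yields that the range $(L-\lambda)C_0^\infty$ is dense in $L_2$. Combining these two ingredients is all that is needed.

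First I would dispose of uniqueness. If $u_1,u_2\in H_2^\sigma$ both solve $Lu-\lambda u=f$, then $w=u_1-u_2\in H_2^\sigma$ satisfies $Lw-\lambda w=0$, and applying estimate \eqref{eq24.08.55} of Lemma \ref{thm1.2} (with right-hand side $0$) forces $\|w\|_{L_2}=0$, hence $u_1=u_2$. This also gives the estimate \eqref{eq24.08.55} for any strong solution we produce.

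For existence, fix $f\in L_2$ and $\lambda>0$. By Lemma \ref{lem2.3}, choose $u_n\in C_0^\infty$ with $f_n:=Lu_n-\lambda u_n\to f$ in $L_2$. Applying the a priori estimate \eqref{eq24.08.55} of Lemma \ref{thm1.2} to $u_n-u_m$, which satisfies $L(u_n-u_m)-\lambda(u_n-u_m)=f_n-f_m$, gives
\begin{equation*}
\|u_n-u_m\|_{\dot H_2^\sigma}+\lambda\|u_n-u_m\|_{L_2}\le N\|f_n-f_m\|_{L_2},
\end{equation*}
so $\{u_n\}$ is Cauchy in $H_2^\sigma$ and converges to some $u\in H_2^\sigma$. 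Because $L:H_2^\sigma\to L_2$ is continuous by Lemma \ref{thm1.2}, we may pass to the limit in $Lu_n-\lambda u_n=f_n$ and conclude $Lu-\lambda u=f$ in $L_2$, so $u$ is the required strong solution. The estimate \eqref{eq24.08.55} for $u$ follows either by taking limits in the inequality for $u_n$ or, equivalently, by a direct application of Lemma \ref{thm1.2} to $u$.

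There is no substantive obstacle here: all the analytic content (the Fourier-side lower bounds yielding the a priori estimate, the continuity of $L$, and the density result built on the maximum principle of Lemma \ref{lem2.2}) has been set up in the preceding lemmas, and the proposition is merely the standard packaging of a priori estimate plus dense range into existence and uniqueness.
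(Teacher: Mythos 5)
Your proposal is correct and follows essentially the same route as the paper: density of $(L-\lambda)C_0^\infty$ from Lemma \ref{lem2.3} (with $b=0$), the a priori estimate and continuity of $L$ from Lemma \ref{thm1.2} to produce a Cauchy sequence in $H_2^\sigma$ and pass to the limit, with uniqueness read off from the estimate \eqref{eq24.08.55}. No substantive difference from the paper's argument.
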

\begin{proof}
Due to Lemma \ref{lem2.3}, we can find a sequence $u_n\in C_0^\infty$ such that $Lu_n-\lambda u_n$ converges to $f$ in $L_2$. By Lemma \ref{thm1.2}, we have
\begin{equation}
                                        \label{eq24.09.16}
\|u_n\|_{\dot H_2^\sigma} + \sqrt{\lambda}\|u_n\|_{\dot{H}^{\sigma/2}_2}
+ \lambda \|u_n\|_{L_2}
\le N(d,\nu) \|Lu_n-\lambda u_n\|_{L_2}
\end{equation}
and
\begin{multline*}
\|u_n-u_m\|_{\dot H_2^\sigma} + \sqrt{\lambda}\|u_n-u_m\|_{\dot{H}^{\sigma/2}_2}
+ \lambda \|u_n-u_m\|_{L_2}\\
\le N(d,\nu) \|L(u_n-u_m)-\lambda (u_n-u_m)\|_{L_2}.
\end{multline*}
Therefore, $\{u_n\}$ is a Cauchy sequence in $H_2^\sigma$ and there is a limiting function $u\in H_2^\sigma$. By the continuity estimate \eqref{eq0101z} and \eqref{eq24.09.16}, $u$ is a strong solution to $Lu - \lambda u =f$ and satisfies \eqref{eq24.08.55}. Finally, the uniqueness follows from the estimate \eqref{eq24.08.55}. The proposition is proved.
\end{proof}

\begin{remark}
In the proof of Proposition \ref{prop2.5}, instead of relying on Lemmas \ref{lem2.2} and \ref{lem2.3}, one may also use the method of continuity and the solvability of $-(-\Delta)^{\sigma/2} - \lambda u = f$ in $H_2^2$.
The same remark applies to the proof of Theorem \ref{mainth01}.
\end{remark}

\mysection{H\"older estimate}						\label{Holder}

In this section we prove a H\"{o}lder estimate of solutions to the equation $Lu - \lambda u =f$. The novelty of the result here is that the constant in the estimate is independent of $\lambda \ge 0$.
Our proof is based on the arguments developed in \cite{Ba11}. In the case $\lambda=0$, similar H\"{o}lder estimates with very different proofs can be found in \cite{CS09} for symmetric kernels and very recently in \cite{KL10} for non-symmetric kernels. We note that more general nonlinear Pucci type operators are treated in \cite{CS09,KL10}.

\begin{theorem}[$C^\alpha$-estimate]
								\label{thm1201}
Let $\lambda \ge 0$, $0 < \sigma < 2$, $1/2\le r<R<1$, and $f \in L_{\infty}(B_1)$.
Let $u \in C^{2}_{\text{loc}}(B_1)\cap L_1(\bR^d,\omega)$ with $\omega(x) = 1/(1+|x|^{d+\sigma})$
such that
$$
Lu - \lambda u = f
$$		
in $B_R$.
Then for any $\alpha \in (0, \min\{1,\sigma\})$,
we have
\begin{multline*}
[u]_{C^{\alpha}(B_r)}\\
\le N \Big( (R-r)^{-\alpha}\sup_{B_R}|u| + (R-r)^{-d-\alpha}\|u\|_{L_1(\bR^d, \omega)} + (R-r)^{\sigma-\alpha}\osc_{B_R}f\Big),
\end{multline*}
where $N=N(d,\nu,\Lambda,\sigma, \alpha)$.
\end{theorem}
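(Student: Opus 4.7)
The plan is to follow the classical oscillation-iteration scheme for H\"older regularity, adapted to nonlocal operators in the spirit of \cite{Ba11}. The three steps are: (i) reduce by scaling to the unit ball, (ii) prove an oscillation-decrease lemma at the unit scale, and (iii) iterate dyadically to recover the $C^\alpha$ bound. For (i), given $x_0\in B_r$, the dilation $\tilde u(x)=u(x_0+(R-r)x)$ satisfies $\tilde L\tilde u-\tilde\lambda\tilde u=\tilde f$ on $B_1$, where the rescaled kernel $\tilde K(y)=(R-r)^{d+\sigma}K((R-r)y)$ still obeys \eqref{eq1211} (and \eqref{eq21.47} when $\sigma=1$) with the same $\nu,\Lambda$, while $\tilde\lambda=(R-r)^\sigma\lambda\ge 0$ and $\tilde f(x)=(R-r)^\sigma f(x_0+(R-r)x)$. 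A direct computation yields $\|\tilde u\|_{L_1(\bR^d,\omega)}\le N(R-r)^{-d}\|u\|_{L_1(\bR^d,\omega)}$. Hence it suffices to prove the theorem with $R=1,\,r=1/2$, and unscaling produces the claimed powers of $R-r$ in front of the three norms.

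The crux is (ii): there exist universal constants $\rho_*,\theta,\eta_0\in(0,1)$, depending only on $d,\nu,\Lambda,\sigma$, such that any $v$ with $Lv-\lambda v=g$ on $B_1$, $|v|\le 1$ on $B_1$, tail $\int_{\bR^d\setminus B_1}|v(y)||y|^{-d-\sigma}\,dy\le 1$, and $\osc_{B_1}g\le\eta_0$, satisfies $\osc_{B_{\rho_*}}v\le 2-\theta$. I would prove it by a barrier argument: construct a $C^2$ function $\phi$ supported in $B_{1/2}$ with $\phi\ge\mathbf{1}_{B_{\rho_*}}$ and $L\phi\le -c_0<0$ on $B_{1/2}$. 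The lower bound $K(y)\ge(2-\sigma)\nu|y|^{-d-\sigma}$ provides a genuinely negative short-range jump contribution near the peak of $\phi$, while the upper bound $\Lambda|y|^{-d-\sigma}$ keeps the remaining contributions finite---this is the only place where two-sided ellipticity is essential. If $\sup_{B_{\rho_*}}v>1-\theta$ one applies the maximum-principle-style Lemma \ref{lem2.2} to $v-(1-\theta)\phi$; the sign condition $\lambda\ge 0$ enters favorably at the would-be maximum point (where $v$ is large and positive, so $-\lambda v\le 0$ can be discarded), and combined with $L\phi\le -c_0$ this contradicts $Lv=\lambda v+g$. Running the symmetric argument on $-v$ gives the oscillation decrement.

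For (iii), set $O_k=\osc_{B_{\rho_*^k}}u$ and normalize
\[
v_k(x)=\frac{u(\rho_*^k x)-c_k}{A_k},
\]
where $c_k$ centers $u$ on $B_{\rho_*^k}$ and $A_k$ combines $O_k$ with the scale-$k$ contributions from $\sup_{B_1}|u|$, $\|u\|_{L_1(\omega)}$, and $\osc_{B_1}f$. The rescaled $v_k$ (satisfying an equation with an ellipticity-preserving rescaled kernel) meets the hypotheses of the lemma, yielding the recursion
\[
O_{k+1}\le(1-\theta/2)\,O_k+N\rho_*^{k\sigma}\bigl(\sup_{B_1}|u|+\|u\|_{L_1(\omega)}+\osc_{B_1}f\bigr).
\]
Standard iteration then gives $O_k\le N\rho_*^{k\alpha}(\cdots)$ for any $\alpha<\min(1,\sigma)$; the restriction $\alpha<\sigma$ is exactly what makes the geometric series converge against the contraction rate, which is equivalent to the claimed H\"older bound on $B_{1/2}$.

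The main obstacle is step (ii): constructing the negative-subsolution barrier $\phi$ for a nonlocal operator whose kernel is merely measurable---Taylor expansion and Fourier multipliers are unavailable, and one must leverage the pointwise ellipticity bounds \eqref{eq1211} directly to obtain $L\phi\le -c_0$. A delicate secondary issue is the bookkeeping in step (iii): because the tail weight $(1+|y|^{d+\sigma})^{-1}$ does not rescale exactly under $y\mapsto\rho_* y$, one must verify that the scale-$k$ tail of $v_k$ is uniformly bounded by the original $\|u\|_{L_1(\omega)}$ (after splitting $B_{\rho_*^k}^c=(B_{\rho_*^k}^c\cap B_1)\cup B_1^c$ and using $\sup_{B_1}|u|$ for the first piece). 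Finally, the $\lambda$-independence of all constants is secured by always exploiting the favorable sign of $-\lambda v$ at extremal points in the comparison step, so that $\lambda$ never enters the final constant.
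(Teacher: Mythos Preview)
Your overall strategy---oscillation decay plus dyadic iteration---is a legitimate alternative route (it is essentially what \cite{CS09,KL10} do), and it is \emph{not} the method the paper uses. The paper instead follows the Ishii--Lions doubling-of-variables scheme from \cite{Ba11}: one localizes $w=1_{B_R}u$, sets $M(x,y)=w(x)-w(y)-C_1|x-y|^\alpha-C_2|x-x_0|^2$, assumes $M$ attains a positive maximum at $(\bar x,\bar y)$, and derives a contradiction by evaluating $Lw(\bar x)-Lw(\bar y)$. The kernel is split as $K=K_1+K_2$ with $K_1=\min\{K(\cdot),K(-\cdot)\}$ symmetric; the symmetric part is integrated over a narrow cone $\cC$ about the direction $\bar x-\bar y$, where the radial concavity of $|z|^\alpha$ produces a strictly negative term $-NC_1|\bar x-\bar y|^{\alpha-\sigma}$ (Lemma~\ref{lem1201}), while the skew part $K_2$ is handled case by case in $\sigma$. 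The $\lambda$-independence comes from $\lambda(w(\bar y)-w(\bar x))\le 0$ at the maximum.

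Your step (ii), however, has a genuine gap. A nonnegative bump $\phi\in C_0^\infty(B_{1/2})$ with $\phi\ge \mathbf 1_{B_{\rho_*}}$ \emph{cannot} satisfy $L\phi\le -c_0$ on all of $B_{1/2}$: at points $x$ near $\partial B_{1/2}$ one has $\phi(x)\approx 0$, $\nabla\phi(x)\approx 0$, and hence $L\phi(x)\approx\int \phi(x+y)K(y)\,dy>0$. At best $L\phi\le -c_0$ holds on a smaller set (say $B_{\rho_*}$), and then the comparison you sketch breaks down because the touching point of $v-(1-\theta)\phi$ need not lie there. More fundamentally, a barrier comparison of this sort can upgrade ``$v$ large at a point'' to ``$v$ not too small nearby'' (a half-Harnack), but by itself it does not force $\osc_{B_{\rho_*}}v\le 2-\theta$: you still need an input that $v$ actually takes small values somewhere, and a mechanism (measure estimate, De~Giorgi iteration, or ABP) to propagate that information. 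This is precisely the nontrivial content of the growth lemmas in \cite{CS09,KL10}, which require substantially more than a single subsolution barrier when $K$ is merely measurable and possibly nonsymmetric. Also note that your hypothesis controls only $\osc_{B_1} g$, not $\|g\|_{L_\infty}$, so even at a legitimate touching point the inequality $g(\bar x)\le -(1-\theta)c_0$ does not yield a contradiction; the paper handles this by subtracting the equations at $\bar x$ and $\bar y$, which is exactly why only the oscillation of $f$ appears.
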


\begin{proof}
Denote $r_1=(R-r)/2$, and $\bar r=(R+r)/2$.
Set $w(x) = I_{B_R}(x)u(x)$.
For $x \in B_{\bar r}$, we have $\nabla u(x)=\nabla w(x)$ and thus
\begin{align*}
Lu(x)&=\int_{\bR^d} \left(u(x+z) - u(x)-z\cdot \nabla u(x)\chi^{(\sigma)}(z)\right) K(z) \, dz\\
&= \int_{\bR^d}\left(w(x+z) - w(x)-z\cdot \nabla w(x)\chi^{(\sigma)}(z)\right) K(z) \, dz \\
&\quad + \int_{\bR^d}(u(x+z)- w(x+z))K(z)\, dz\\
&=Lw(x) + \int_{|z|\ge r_1}(u(x+z)- w(x+z))K(z)\, dz.
\end{align*}
Hence in $B_{\bar r}$
$$
\lambda w(x) - Lw(x) = g(x)-f(x),
$$
where
$$
g(x) = \int_{|z|\ge r_1}(u(x+z)- w(x+z))K(z)\,dz.
$$
Note that
\begin{equation}
								\label{eq1204}
\|g\|_{L_{\infty}(B_R)}
\le N r_1^{-d-\sigma}\|u\|_{L_1(\bR^d,\omega)},
\end{equation}
where $N=N(d,\Lambda,\sigma)$.

For $x_0 \in B_r$, we set
$$
M(x,y):= w(x) - w(y) - \phi(x-y) - \Gamma(x),
$$
where $\phi(z) = C_1 |z|^\alpha$, $\alpha \in (0,\min\{1,\sigma\})$, and $\Gamma(x) = C_2|x-x_0|^2$.
We will find $C_1, C_2 \in (0,\infty)$ depending only on $d$, $\nu$, $\Lambda$, $\sigma$, $\|u\|_{L_\infty(B_R)}$, $\|u\|_{L_1(\bR^d,\omega)}$, $\osc_{B_R}f$, $r_1$, but independent of the choice of $x_0 \in B_r$,
such that
\begin{equation}
								\label{eq1205}
\sup_{x, y \in \bR^d}M(x,y) \le 0.	
\end{equation}
This proves the assertion in the theorem. More specifically, using the fact the $C_1$ and $C_2$ are independent of the choice of $x_0 \in B_r$, we obtain
$$
|u(x) - u(y)| \le C_1 |x-y|^{\alpha},
\quad x, y \in B_r,
$$
where $C_1$ will be taken below to be the right-hand side of the H\"{o}lder estimate in the theorem.

To prove \eqref{eq1205}, we first take
$$
C_2 := 8r_1^{-2}\|u\|_{L_\infty(B_R)}.
$$
Then, for $x \in \bR^d\setminus B_{r_1/2}(x_0)$,
$$
w(x)-w(y) \le 2 \|u\|_{L_\infty(B_R)} \le C_2|x-x_0|^2.
$$
This shows that
\begin{equation}
								\label{eq1201}
M(x,y) \le 0, \quad
x \in \bR^d\setminus B_{r_1/2}(x_0).
\end{equation}

To get a contradiction, let us assume that there exist $x, y \in \bR^d$
such that $M(x, y) > 0$. By \eqref{eq1201} we know that $x \in B_{r_1/2}(x_0)\subset B_{(\bar r+r)/2}$.
Moreover, if $M(x, y) > 0$, then
\begin{equation}
								\label{eq1202}
w(x) - w(y) > C_1|x - y|^{\alpha},
\quad
\text{i.e.,}
\quad
|x - y|^\alpha < \frac{2 \|u\|_{L_\infty(B_R)}}{C_1}.	
\end{equation}
If we take a sufficiently large $C_1$ so that $C_1 \ge 2^{1+\alpha}r_1^{-\alpha}\|u\|_{L_\infty(B_R)}$,
the above inequalities show that $y \in B_{\bar r}$.
Therefore, the assumption that $M(x,y)>0$ for some $x,y \in \bR^d$ (and the continuity of $u$ on $B_R$) enables us to assume that there exist $\bar x, \bar y \in B_{\bar r}$ satisfying $\sup_{x,y \in \bR^d}M(x, y) = M(\bar x, \bar y)>0$.

Note that at $\bar x, \bar y \in B_{\bar r}$ we have
\begin{align*}
g(\bar y)-f(\bar y) &= \lambda w(\bar y) - Lw(\bar y),\\
- g(\bar x)+f(\bar x) &= - \lambda w(\bar x) + Lw(\bar x).
\end{align*}
Thus, upon observing $w(\bar y) - w(\bar x) < 0$, it follows that
\begin{multline}
								\label{eq1206}
-2\|g\|_{L_\infty(B_R)}-\osc_{B_R}f \le \lambda \left(w(\bar y) - w(\bar x)\right)
+ Lw(\bar x)-Lw(\bar y)
\\
\le Lw(\bar x)-Lw(\bar y):= I.
\end{multline}
We decompose $K$ into a symmetric part $K_1$ and non-symmetric part $K_2$, where
$$
K_1(z)=\min\{K(z),K(-z)\},\quad K_2(z)=K(z)-K_1(z).
$$
Clearly, the kernel $K_1$ also satisfies \eqref{eq1211}, and $K_2\ge 0$ has the upper bound in \eqref{eq1211}. Let $L_1$ and $L_2$ be the elliptic operators with kernels $K_1$ and $K_2$, respectively.
Then $I$ in \eqref{eq1206} can be written as
$$
I = I_1 + I_2,
$$
where
\begin{equation}
							\label{eq102401}
I_1 :=L_1w(\bar x)-L_1w(\bar y),
\quad
I_2 :=L_2w(\bar x)-L_2w(\bar y).
\end{equation}

Thanks to the symmetry of $K_1$, we have
$$
I_1=\frac 1 2\int_{\bR^d} J(\bar x, \bar y, z) K_1(z)\,dz,
$$
where
$$
J(\bar x, \bar y, z) = w(\bar x+z) + w(\bar x-z) - 2 w(\bar x) - w(\bar y+z) - w(\bar y-z) + 2 w(\bar y).
$$
Since $M(x,y)$ attains its maximum at $\bar x, \bar y$, we have
\begin{align}
&w(\bar x + z) - w(\bar y + z) - \phi(\bar x - \bar y) - \Gamma(\bar x + z)\nonumber\\
                        \label{eq05.16.51}
&\quad \le
w(\bar x) - w(\bar y) - \phi(\bar x- \bar y) - \Gamma(\bar x),
\end{align}
and
\begin{align*}
&w(\bar x - z) - w(\bar y - z) - \phi(\bar x - \bar y) - \Gamma(\bar x - z)\\
&\quad \le
w(\bar x) - w(\bar y) - \phi(\bar x- \bar y) - \Gamma(\bar x)
\end{align*}
for all $z \in \bR^d$.
These two inequalities lead us to
\begin{equation}
								\label{eqk1}
J(\bar x, \bar y, z) \le \Gamma (\bar x + z) + \Gamma(\bar x - z) - 2 \Gamma (\bar x),
\quad
z \in \bR^d.
\end{equation}
By again the assumption that $M(x,y)$ has the maximum at $\bar x, \bar y$, we have
\begin{align*}
&w(\bar x + z) - w(\bar y - z) - \phi(\bar x - \bar y + 2z) - \Gamma(\bar x + z)\\
&\quad\le
w(\bar x) - w(\bar y) - \phi(\bar x- \bar y) - \Gamma(\bar x),
\end{align*}
and
\begin{align*}
&w(\bar x - z) - w(\bar y + z) - \phi(\bar x - \bar y - 2z) - \Gamma(\bar x - z)\\
&\quad\le
w(\bar x) - w(\bar y) - \phi(\bar x- \bar y) - \Gamma(\bar x)
\end{align*}
for all $z \in \bR^d$.
Hence it follows that, for any $z \in \bR^d$,
\begin{multline}
								\label{eqk2}
J(\bar x, \bar y, z) \le
\phi(\bar x - \bar y + 2z) + \phi(\bar x - \bar y - 2z)- 2 \phi(\bar x - \bar y)  \\
+ \Gamma (\bar x + z) + \Gamma(\bar x - z) - 2 \Gamma (\bar x).			
\end{multline}

Set $a=\bar x - \bar y$.
Since $\bar x, \bar y$ satisfy \eqref{eq1202}, we have $|a| < r_1/2$.
Also set, for some $\eta_1, \eta_2 \in (0,1/2)$,
$$
\cC = \{ |z| <  \eta_1|a|: |z\cdot a| \ge (1-\eta_2) |a||z|\}.
$$
Then $\cC \subset B_{r_1}$ and
\begin{multline}
								\label{eq1207}
2I_1 = \int_{|z|\ge r_1} J(\bar x, \bar y, z) K_1(z) \, dz
+ \int_{B_{r_1} \setminus \cC}J(\bar x, \bar y, z) K_1(z) \, dz\\
+ \int_{\cC}J(\bar x, \bar y, z) K_1(z) \, dz
:= T_1 + T_2 + T_3.	
\end{multline}
Note that
$$
T_1 \le N(d,\Lambda,\sigma) r_1^{-\sigma}\|u\|_{L_\infty(B_R)}.
$$
By \eqref{eqk1} it follows
$$
T_2 \le \int_{B_{r_1} \setminus \cC} \left(\Gamma(\bar x + z) + \Gamma(\bar x - z) - 2 \Gamma(\bar x)\right) K_1(z) \, dz
\le Nr_1^{2-\sigma}C_2,
$$
where $N=N(d,\Lambda)$, but $N$ is independent of $\eta_1,\eta_2$ in the definition of $\cC$.
Now using \eqref{eqk2} we obtain
\begin{multline*}
T_3 \le \int_{\cC}
\left(\phi(\bar x - \bar y + 2z) + \phi(\bar x - \bar y - 2z) - 2 \phi(\bar x - \bar y)
\right)K_1(z)\,dz\\
+ \int_{\cC}\left(\Gamma (\bar x + z) + \Gamma(\bar x - z) - 2 \Gamma (\bar x)							 \right)K_1(z)\,dz
:= T_{3,1}+T_{3,2}.
\end{multline*}
The term $T_{3,2}$ is again bounded by $N r_1^{2-\sigma}C_2$, where $N = N(d,\Lambda)$.
Finally, by Lemma \ref{lem1201} below,
$$
T_{3,1} \le - N(d,\nu,\alpha) C_1 |a|^{\alpha-\sigma}.
$$
Thus, we get from \eqref{eq1207} and the choice of $C_2$ that
\begin{equation}
                                        \label{eq1207bb}
I_1\le N(d,\Lambda,\sigma)r_1^{-\sigma}\|u\|_{L_\infty(B_R)}-N(d,\nu,\alpha) C_1 |a|^{\alpha-\sigma}.
\end{equation}

Next we estimate $I_2 =L_2w(\bar x)-L_2w(\bar y)$ in \eqref{eq102401}.
We consider separately three cases: $\sigma<1$, $\sigma=1$, and $\sigma>1$.

{\em Case 1: $\sigma\in (0,1)$.}
In this case,
\begin{align}
                                            \label{eq5.17.14}
I_2&=\left(\int_{|z|\ge r_1}+\int_{B_{r_1}}\right)\left(w(\bar x+z)-w(\bar x)-w(\bar y+z)+w(\bar y)\right)K_2(z)\,dz\nonumber\\
&:=T_4+T_5.
\end{align}

Similar to $T_1$, we bound $T_4$ by $N(d,\Lambda,\sigma) r_1^{-\sigma}\|u\|_{L_\infty(B_R)}$.
Since $\sigma\in (0,1)$ and $|\bar x-x_0|<r_1/2$ by \eqref{eq1201}, from \eqref{eq05.16.51} we have
\begin{align*}
T_5&\le \int_{B_{r_1}}(\Gamma(\bar x+z)-\Gamma(\bar x))K_2(z)\,dz\\
&\le 2(2-\sigma)\Lambda C_2 \int_{B_{r_1}}(|z|^2+2|z||\bar x-x_0|)\frac{1}{|z|^{d+\sigma}}\,dz\\
&\le N(d,\Lambda,\sigma)r_1^{2-\sigma}C_2.
\end{align*}
Therefore, we get from \eqref{eq5.17.14} and the choice of $C_2$ that
\begin{equation}
                                        \label{eq5.17.54}
I_2\le N(d,\Lambda,\sigma)r_1^{-\sigma}\|u\|_{L_\infty(B_R)}.
\end{equation}

Combining \eqref{eq1206}, \eqref{eq1207bb}, \eqref{eq5.17.54}, and \eqref{eq1204}
we finally have
\begin{multline*}
0 \le N(d,\Lambda,\sigma)\left( \osc_{B_R}f + r_1^{-\sigma}\|u\|_{L_\infty(B_R)} + r_1^{-d-\sigma}\|u\|_{L_1(\bR^d,\omega)}\right) \\
- N(d,\nu,\alpha) C_1 |a|^{\alpha-\sigma}:=J.
\end{multline*}
Choose $C_1$ so that $C_1 \ge 2^{1+\alpha}r_1^{-\alpha} \|u\|_{L_{\infty}(B_R)}$ as well as
\begin{multline*}
C_1 \ge N(d,\Lambda,\sigma)r_1^{\sigma-\alpha}\Big(\osc_{B_R}f + r_1^{-\sigma}\|u\|_{L_\infty(B_R)} \\
+ r_1^{-d-\sigma}\|u\|_{L_1(\bR^d,\omega)}\Big)/ N(d,\nu,\alpha).
\end{multline*}
Then, for $\alpha \in (0,\min\{1,\sigma\})$, by \eqref{eq1202} $|a|^{\alpha-\sigma}r_1^{\sigma-\alpha} > 1$
and
\begin{multline*}
J \le N(d,\Lambda,\sigma)\Big(\osc_{B_R}f + r_1^{-\sigma}\|u\|_{L_\infty(B_R)} \\
+ r_1^{-d-\sigma}\|u\|_{L_1(\bR^d,\omega)}\Big)
(1-|a|^{\alpha-\sigma}r_1^{\sigma-\alpha}) < 0.
\end{multline*}
This contradicts the fact that $J\ge 0$.

{\em Case 2: $\sigma=1$.} Note that, because $K_1$ is symmetric, both $K_1$ and $K_2$ satisfy \eqref{eq21.47}. Therefore, $1_{B_1}$ can be replaced by $1_{B_{r_1}}$ in the definition of $L_2$, and we have $I_2=T_4+T_5$, where
\begin{align*}
T_4&=\int_{|z|\ge r_1}\big(w(\bar x+z)-w(\bar x)-w(\bar y+z)+w(\bar y)\big)K_2(z)\,dz,\\
T_5&=\int_{B_{r_1}}\big(w(\bar x+z)-w(\bar x)-w(\bar y+z)+w(\bar y)\\
&\qquad\quad -z\cdot(\nabla w(\bar x)-\nabla w(\bar y))\big)K_2(z)\,dz.
\end{align*}
Then we bound $T_4$ as in Case 1.

Since $M(x,y)$ attains its maximum at the interior point $(\bar x,\bar y)$, we easily get
\begin{equation}
								\label{equ01}
\nabla w(\bar x)=\nabla \phi(\bar x-\bar y)+\nabla \Gamma(\bar x),\quad
\nabla w(\bar y)=\nabla \phi(\bar x-\bar y).
\end{equation}
For $T_5$,  using \eqref{eq05.16.51} and \eqref{equ01}, we have
\begin{align*}
T_5&\le \int_{B_{r_1}}\big(\Gamma(\bar x+z)-\Gamma(\bar x)-z\cdot(\nabla w(\bar x)-\nabla w(\bar y))1_{B_1}\big)K_2(z)\,dz\\
&= \int_{B_{r_1}}\big(\Gamma(\bar x+z)-\Gamma(\bar x)-z\cdot\nabla \Gamma(\bar x)\big)K_2(z)\,dz\\
&= \int_{B_{r_1}}C_2|z|^2 K_2(z)\,dz\\
&\le N(d,\Lambda)r_1^{2-\sigma}C_2.
\end{align*}
Then we argue as in Case 1 to get the contradiction.

{\em Case 3: $\sigma\in (1,2)$.} Now $I_2=T_4+T_5$, where
\begin{align*}
T_4&=\int_{|z|\ge r_1}\big(w(\bar x+z)-w(\bar x)-w(\bar y+z)+w(\bar y)\\
&\qquad\quad -z\cdot(\nabla w(\bar x)-\nabla w(\bar y))\big)K_2(z)\,dz,\\
T_5&=\int_{B_{r_1}}\big(w(\bar x+z)-w(\bar x)-w(\bar y+z)+w(\bar y)\\
&\qquad\quad -z\cdot(\nabla w(\bar x)-\nabla w(\bar y))\big)K_2(z)\,dz.
\end{align*}
Because $\sigma\in (1,2)$, $|\bar x-x_0|<r_1/2$, and $C_2 = 8r_1^{-2}\|u\|_{L_\infty(B_R)}$, by \eqref{equ01} we have
\begin{align*}
T_4&\le \int_{|z|\ge r_1}\big(4\|u\|_{L_\infty(B_R)}+|z||\nabla \Gamma(\bar x)|\big)K_2(z)\,dz\\
&\le N(d,\Lambda,\sigma)r_1^{-\sigma}\|u\|_{L_\infty(B_R)}.
\end{align*}
It follows from \eqref{eq05.16.51} and \eqref{equ01} that
\begin{align*}
T_5&\le \int_{B_{r_1}}(\Gamma(\bar x+z)-\Gamma(\bar x)-z\cdot \nabla \Gamma(\bar x))K_2(z)\,dz\\
&= \int_{B_{r_1}}C_2|z|^2 K_2(z)\,dz\\
&\le N(d,\Lambda)r_1^{2-\sigma}C_2.
\end{align*}
So we again argue as in Case 1 to arrive at the contradiction.

Therefore, we conclude that \eqref{eq1205} holds true in all three cases.
The theorem is proved.
\end{proof}

Recall that $a = \bar x - \bar y$ and
$$	
T_{3,1}=\int_{\cC}
\left(\phi(a + 2z) + \phi(a - 2z) - 2 \phi(a)
\right)K_1(z)\,dz,
$$	
where
$$
\cC = \{ |z| <  \eta_1|a|: |z\cdot a| \ge (1-\eta_2) |a||z|\}.
$$

\begin{lemma}
								\label{lem1201}
There exist $\eta_1, \eta_2 \in (0,1/2)$, depending only on $\alpha$,
such that
\begin{equation}
								\label{eq1209}
T_{3,1} \le - N C_1 |a|^{\alpha-\sigma},	
\end{equation}
where $N=N(d,\nu,\alpha)>0$.
\end{lemma}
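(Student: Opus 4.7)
The plan is to exploit the radial concavity of $\phi(v)=C_1|v|^{\alpha}$ when $\alpha<1$: along directions close to the radial direction, the second derivative of $\phi$ is strictly negative, of order $|v|^{\alpha-2}$. The set $\cC$ is designed precisely so that $z$ is almost parallel to $a$, which lets us treat the difference $\phi(a+2z)+\phi(a-2z)-2\phi(a)$ as essentially a radial increment at $v=a$.

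First I would write the difference via the standard second-order integral identity
\[
\phi(a+2z)+\phi(a-2z)-2\phi(a)=\int_{-2}^{2}(2-|\tau|)\,D^{2}\phi(a+\tau z)[z,z]\,d\tau,
\]
and compute
\[
D^{2}\phi(v)[z,z]=\alpha C_{1}|v|^{\alpha-2}\Bigl(|z|^{2}-(2-\alpha)\bigl((v\cdot z)/|v|\bigr)^{2}\Bigr).
\]
Fix $z\in\cC$. Since $|z|<\eta_{1}|a|$ and $|\tau|\le 2$, one has $(1-2\eta_{1})|a|\le|v|\le(1+2\eta_{1})|a|$. Writing $a\cdot z=\pm(1-\eta_{2}')|a||z|$ with $\eta_{2}'\le\eta_{2}$, a direct computation gives
\[
\frac{(v\cdot z)^{2}}{|v|^{2}|z|^{2}}\ge\left(\frac{1-\eta_{2}-2\eta_{1}}{1+2\eta_{1}}\right)^{\!2}=:c_{0}^{2}.
\]
Since $\alpha<1$, we have $1/(2-\alpha)<1$, so we may choose $\eta_{1},\eta_{2}\in(0,1/2)$ depending only on $\alpha$ to ensure $c_{0}^{2}(2-\alpha)\ge 1+\tfrac{1-\alpha}{2}$. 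With this choice, and using $|v|^{\alpha-2}\ge(2|a|)^{\alpha-2}$,
\[
D^{2}\phi(a+\tau z)[z,z]\le-N(\alpha)\,C_{1}|a|^{\alpha-2}|z|^{2},
\]
so that, after integrating in $\tau$ (which contributes a factor of $4$),
\[
\phi(a+2z)+\phi(a-2z)-2\phi(a)\le-N(\alpha)\,C_{1}|a|^{\alpha-2}|z|^{2},\qquad z\in\cC.
\]

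Next I would plug this pointwise bound into the integral defining $T_{3,1}$ and use the lower ellipticity bound $K_{1}(z)\ge(2-\sigma)\nu|z|^{-d-\sigma}$. Switching to polar coordinates $z=r\omega$,
\[
\int_{\cC}|z|^{2}K_{1}(z)\,dz\ge(2-\sigma)\nu\,|S_{\eta_{2}}|\int_{0}^{\eta_{1}|a|}r^{1-\sigma}\,dr=\nu\,|S_{\eta_{2}}|\,\eta_{1}^{\,2-\sigma}|a|^{2-\sigma},
\]
where $|S_{\eta_{2}}|$ is the surface measure of the polar caps $\{\omega\in S^{d-1}:|\omega\cdot\hat{a}|\ge 1-\eta_{2}\}$ and the crucial cancellation of the two $(2-\sigma)$ factors makes the bound uniform as $\sigma\uparrow2$. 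Combining the two estimates yields
\[
T_{3,1}\le -N(\alpha)\,\nu\,|S_{\eta_{2}}|\,\eta_{1}^{\,2-\sigma}\,C_{1}|a|^{\alpha-\sigma}.
\]
Since $\eta_{1},\eta_{2}$ depend only on $\alpha$ and $\eta_{1}^{2-\sigma}\ge\eta_{1}^{2}$ uniformly for $\sigma\in(0,2)$, the multiplicative constant depends only on $d,\nu,\alpha$, giving \eqref{eq1209}.

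The main obstacle is the correct calibration of $\eta_{1},\eta_{2}$: we need $c_{0}$ close enough to $1$ (depending on $\alpha$, worst when $\alpha\uparrow 1$) so that the negative contribution from $(2-\alpha)(v\cdot z/|v|)^{2}$ dominates the positive $|z|^{2}$ term uniformly along the whole segment $v=a+\tau z$, $|\tau|\le 2$. Once this choice is made, the rest is a routine integration together with the observation that the two factors of $2-\sigma$ cancel.
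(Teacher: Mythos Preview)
Your proof is correct and follows essentially the same route as the paper: both compute the Hessian $D^{2}\phi(v)[z,z]=\alpha C_{1}|v|^{\alpha-2}\bigl(|z|^{2}-(2-\alpha)((v\cdot z)/|v|)^{2}\bigr)$, use the geometric constraints of $\cC$ to bound $|(a+\tau z)\cdot z|\ge(1-2\eta_{1}-\eta_{2})|a||z|$ and $|a+\tau z|\le(1+2\eta_{1})|a|$, choose $\eta_{1},\eta_{2}$ depending only on $\alpha$ so that the negative radial term dominates, and then integrate $|z|^{2}K_{1}(z)$ over $\cC$ with the lower ellipticity bound. The only cosmetic difference is that the paper invokes the second-order mean value theorem $\varphi(1)+\varphi(-1)-2\varphi(0)=\varphi''(t_{0})$ (with $\varphi(t)=\phi(a+2tz)$), whereas you use the equivalent integral identity $\int_{-2}^{2}(2-|\tau|)D^{2}\phi(a+\tau z)[z,z]\,d\tau$; since both rely on the same uniform pointwise bound for the second derivative over the full segment, the arguments are interchangeable.
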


\begin{proof}
The idea of the proof is to use the local concavity of the function $|x|^\alpha$ in the radial direction.
Set $\eta(t) = a + 2 t z$, where $a = \bar x - \bar y$.	
Then
$$
\varphi(t):=\phi(a+2tz) = \phi(\eta(t)).
$$	
Since $\phi(x)=C_1|x|^\alpha$, we have
\begin{align*}
\frac{\partial \phi}{\partial x_i}(x) &= C_1 \frac{\partial}{\partial x_i}\left(|x|^\alpha\right)
= C_1 \alpha x_i |x|^{\alpha-2},\\
\frac{\partial^2 \phi}{\partial x_i \partial x_j}(x)
&= C_1 \alpha(\alpha-2) x_i x_j |x|^{\alpha-4}
+ C_1 \alpha|x|^{\alpha-2} I_{i=j}.
\end{align*}
Hence
$$
\varphi'(t) = \sum_{i=1}^d \frac{\partial \phi}{\partial x_i}(\eta(t)) \frac{d \eta_i(t)}{dt} =
\sum_{i=1}^d \frac{\partial \phi}{\partial x_i}(\eta(t)) 2 z_i
$$
and
\begin{align*}
\varphi''(t) &= \sum_{i,j=1}^d \frac{\partial^2 \phi}{\partial x_i \partial x_j}(\eta(t)) 4 z_i z_j\\
&= 4 C_1 \alpha(\alpha-2)|\eta(t)|^{\alpha-4} |\eta(t) \cdot z|^2 + 4 C_1 \alpha|\eta(t)|^{\alpha-2}|z|^2\\
&= 4 C_1 \alpha |a + 2 t z|^{\alpha-4}
\left[ (\alpha-2) |(a + 2 t z)\cdot z|^2 + |a+ 2 t z|^2 |z|^2 \right].
\end{align*}
Observe that, on $\cC$,
$$
|a+2 t z|^2 \le (1+2 \eta_1)^2|a|^2,
$$
$$
|(a+2 t z)\cdot z| = |a \cdot z + 2 t |z|^2|
\ge |a \cdot z| - 2 |z|^2
$$
$$
\ge (1-\eta_2)|a||z| - 2 |z|^2
\ge (1-2 \eta_1 - \eta_2)|z||a|
$$
for all $t \in [-1,1]$.
Thus upon noting $\alpha-2<0$ we get
\begin{equation}
								\label{eq1203}
\varphi''(t) \le 4 C_1 \alpha |a + 2 t z|^{\alpha-4}
\left[ (\alpha-2)(1-2\eta_1-\eta_2)^2 + (1+2\eta_1)^2 \right] |a|^2|z|^2.
\end{equation}
Since $(1-2\eta_1-\eta_2)^2 \to 1$ and $(1+2\eta_1)^2 \to 1$ as $\eta_1, \eta_2 \searrow 0$,
there exist sufficiently small $\eta_1, \eta_2 \in (0,1/2)$, depending only on $\alpha \in (0,1)$, such that
$$
(\alpha-2)(1-2\eta_1-\eta_2)^2 + (1+2\eta_1)^2
\le (\alpha-1)/2.
$$
This together with \eqref{eq1203} implies that
$$
\varphi''(t) \le - 2 C_1 \alpha (1-\alpha)|a + 2 t z|^{\alpha-4}|a|^2|z|^2.
$$
From this and the fact that
$$
|a+2 t z|^{\alpha-4} \ge (1+2 \eta_1)^{\alpha-4}|a|^{\alpha-4} \ge 2^{\alpha-4}|a|^{\alpha-4},
$$
we arrive at
\begin{equation}
								\label{eq1210}
\varphi''(t) \le -2^{\alpha-3} C_1 \alpha (1-\alpha)|a|^{\alpha-2}|z|^2,\quad
t \in [-1,1],
\quad z \in \cC.	
\end{equation}
On the other hand, by the mean value theorem for difference quotients,
there exists $t_0 \in (-1,1)$ satisfying
$$
\varphi(1) + \varphi(-1) - 2\varphi(0)
= \varphi''(t_0).
$$
Using this equality and \eqref{eq1210},
we have
\begin{equation}
								\label{eq1208}
T_{3,1}
\le - \int_{\cC} 2^{\alpha-3} C_1 \alpha (1-\alpha)|a|^{\alpha-2}|z|^2 K_1(z) \, dz.
\end{equation}
From the definition of $\cC$ it follows that
$$
\int_{\cC} |z|^2 K_1(z) \, dz \ge \nu (2-\sigma) \int_{\cC} |z|^{2-d-\sigma} \, dz
 = N(d, \nu, \eta_2) \eta_1^{2-\sigma}|a|^{2-\sigma}.
$$
Combining this with \eqref{eq1208} and recalling the fact that $\eta_1$, $\eta_2$ depend only on $\alpha$, we finally obtain the inequality \eqref{eq1209}.
\end{proof}

In the next section we will need a bound of the $C^\alpha$ norm of $u$ only in terms of $f$ and the weighted $L_1$ norm of $u$.
To this end, in the corollary below
we use an iteration argument to drop the term $\sup_{B_R}|u|$ on the right-hand side of the estimate in Theorem \ref{thm1201}.

\begin{corollary}
                                    \label{cor2.3}
Let $\lambda \ge 0$, $0 < \sigma < 2$, and $f \in L_{\infty}(B_1)$.
Let $u \in C^{2}_{\text{loc}}(B_1)\cap L_1(\bR^d,\omega)$ with $\omega(x) = 1/(1+|x|^{d+\sigma})$
such that
$$
Lu - \lambda u = f
$$		
in $B_1$. Then for any $\alpha \in (0, \min\{1,\sigma\})$, we have
\begin{equation}
                                        \label{eq13.30}
[u]_{C^{\alpha}(B_{1/2})} \le N \|u\|_{L_1(\bR^d, \omega)} + N\osc_{B_1}f,
\end{equation}
where $N=N(d,\nu,\Lambda,\sigma, \alpha)$.
\end{corollary}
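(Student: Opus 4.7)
My plan is to eliminate the $\sup_{B_R}|u|$ term on the right-hand side of Theorem~\ref{thm1201} by combining it with an interior interpolation inequality and then iterating. Set $A := \|u\|_{L_1(\bR^d,\omega)} + \osc_{B_1}f$. The first ingredient is the bound: for $1/2 \le R < R' < 1$ and $\epsilon \in (0, R'-R]$,
\[
\sup_{B_R}|u| \le \epsilon^{\alpha}[u]_{C^\alpha(B_{R'})} + N\epsilon^{-d}A,\qquad N = N(d).
\]
To see this, for each $x \in B_R$ the ball $B_\epsilon(x)$ lies in $B_{R'} \subset B_1$, so it contains a point $y$ with $|u(y)|$ no larger than the average of $|u|$ on $B_\epsilon(x)$. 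Since $\omega(z) \ge 1/2$ on $B_1$, this average is bounded by $N\epsilon^{-d}\int_{B_1}|u|\omega\,dz \le N\epsilon^{-d}A$, while $|u(x)-u(y)| \le \epsilon^{\alpha}[u]_{C^\alpha(B_{R'})}$.

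Next, I would combine this with Theorem~\ref{thm1201}, which gives for $1/2 \le r < R < 1$
\[
[u]_{C^\alpha(B_r)} \le N_0(R-r)^{-\alpha}\sup_{B_R}|u| + N_0(R-r)^{-d-\alpha}A,
\]
where the $\osc_{B_1}f$ term has been absorbed using $R-r\le 1$ and $\sigma>\alpha$. Inserting the interpolation bound with $\epsilon = \delta(R-r)$ and $R' = R + \epsilon$, and choosing $\delta := (2N_0)^{-1/\alpha}$ so that $N_0\delta^\alpha = 1/2$, one arrives at
\[
[u]_{C^\alpha(B_r)} \le \tfrac{1}{2}[u]_{C^\alpha(B_{R'})} + C(R-r)^{-d-\alpha}A
\]
whenever $R + \delta(R-r)<1$. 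Rewriting with $R'$ as the outer radius (so $R'-r = (1+\delta)(R-r)$) this becomes
\[
\Phi(r) \le \tfrac{1}{2}\Phi(R') + C'(R'-r)^{-d-\alpha}A,\qquad 1/2\le r<R'<1,
\]
where $\Phi(\rho):=[u]_{C^\alpha(B_\rho)}$.

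I would finish by applying a standard Campanato--Giaquinta iteration lemma to $\Phi$ on $[1/2, 3/4]$. Since $\Phi$ is nondecreasing and bounded there (as $u\in C^2_{\text{loc}}(B_1)$) and the contraction factor $1/2$ is strictly less than one, the lemma outputs $\Phi(1/2) \le NA$, which is precisely~\eqref{eq13.30}. The main subtlety is securing a contraction factor strictly less than one: the naive ``same-ball'' interpolation $\sup_{B_R}|u| \le (2R)^{\alpha}[u]_{C^\alpha(B_R)} + \operatorname{avg}_{B_R}|u|$ produces a coefficient of order one, too large to iterate against the $N_0(R-r)^{-\alpha}$ prefactor; the remedy is to interpolate on the slightly \emph{larger} ball $B_{R'}$ and to use an interior radius $\epsilon\ll R-r$, so that the net coefficient $N_0(R-r)^{-\alpha}\epsilon^{\alpha} = N_0\delta^{\alpha}$ is a pure number that can be made arbitrarily small by choosing $\delta$ small.
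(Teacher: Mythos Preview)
Your proof is correct and follows essentially the same strategy as the paper: iterate Theorem~\ref{thm1201} together with an interpolation inequality that trades $\sup_{B_R}|u|$ for a small multiple of the H\"older seminorm plus an $L_1$ term, then absorb. The only cosmetic difference is the packaging --- the paper runs an explicit dyadic iteration (radii $r_n=1-2^{-n-1}$, geometric weights $2^{-3dn/\alpha}$, telescoping sum) and interpolates with a free parameter $\varepsilon$ on the \emph{same} ball $B_{(n+1)}$, whereas you interpolate on a slightly larger ball and invoke the continuous Campanato--Giaquinta lemma; these are standard equivalent variants.
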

\begin{proof}
Set
$$
r_n = 1 -2^{-n-1}, \quad B_{(n)}=B_{r_n}, \quad n = 0,1, 2, \cdots.
$$
Theorem \ref{thm1201} gives, for $n=0,1,2,\cdots$,
\begin{equation}
                                    \label{eq12.36}
[u]_{C^{\alpha}(B_{(n)})} \le N_1 \Big( 2^{2n}\sup_{B_{(n+1)}}|u| +
2^{(d+\alpha)n}\|u\|_{L_1(\bR^d, \omega)} + \osc_{B_{(n+1)}}f\Big),
\end{equation}
where $N_1=N_1(d,\nu,\Lambda,\sigma, \alpha)$ is a constant independent of $n$. To estimate the first term on the right-hand side of \eqref{eq12.36}, by the well-known interpolation inequality, we have
\begin{equation}
                                    \label{eq13.15}
\sup_{B_{(n+1)}}|u|\le \varepsilon [u]_{C^{\alpha}(B_{(n+1)})}+N\varepsilon^{-d/\alpha} \|u\|_{L_1(B_{(n+1)})},\quad \forall \varepsilon\in (0,1).
\end{equation}
Upon taking $\varepsilon=(N_1 2^{2n+3d/\alpha})^{-1}$ and combining \eqref{eq12.36} and \eqref{eq13.15}, we get
\begin{align}
[u]_{C^{\alpha}(B_{(n)})} &\le 2^{-3d/\alpha}[u]_{C^{\alpha}(B_{(n+1)})}
+N2^{2nd/\alpha}\|u\|_{L_1(B_{1})}\nonumber \\
                                    \label{eq12.37}
&\quad +N2^{(d+\alpha)n}\|u\|_{L_1(\bR^d, \omega)} + N\osc_{B_{1}}f.
\end{align}
We multiply both sides of \eqref{eq12.37} by $2^{-3dn/\alpha}$ and sum over $n$ to obtain
\begin{align*}
&\sum_{n=0}^\infty 2^{-3dn/\alpha}[u]_{C^{\alpha}(B_{(n)})}\\
&\,\,\le \sum_{n=0}^\infty 2^{-3d(n+1)/\alpha}[u]_{C^{\alpha}(B_{(n+1)})}
+N\sum_{n=0}^\infty 2^{-dn/\alpha}\|u\|_{L_1(B_{1})}\\
&\quad +N\sum_{n=0}^\infty 2^{-3dn/\alpha+(d+\alpha)n}\|u\|_{L_1(\bR^d, \omega)} + N\sum_{n=0}^\infty 2^{-3dn/\alpha}\osc_{B_{1}}f,
\end{align*}
which immediately yields \eqref{eq13.30}. The corollary is proved.
\end{proof}

\mysection{Mean oscillation estimates}						\label{moe}

This section is devoted to several mean oscillation estimates for $u$ and its fractional derivative $(-\Delta)^{\sigma/2}u$ by using the $L_2$ estimate in Section \ref{L_2} and the H\"older estimate established in Section \ref{Holder}.

We recall the maximal function theorem and the Fefferman--Stein theorem.
Let the maximal and sharp functions of $g$ defined on $\bR^d$ be given by
\begin{align*}
\cM g (x) &= \sup_{r>0} \dashint_{B_r(x)} |g(y)| \, dy,\\
g^{\#}(x) &= \sup_{r>0} \dashint_{B_r(x)} |g(y) -
(g)_{B_r(x)}| \, dy.
\end{align*}
Then
\begin{equation}
								\label{eq1221}
\| g \|_{L_p} \le N \| g^{\#} \|_{L_p},
\quad
\| \cM g \|_{L_p} \le N \| g\|_{L_p},
\end{equation}
if $g \in L_p$, where $1 < p < \infty$ and $N = N(d,p)$.
As is well known, the first inequality above is due to the Fefferman--Stein theorem on sharp functions
and the second one to the Hardy--Littlewood maximal function theorem (this inequality also holds trivially when $p = \infty$).
Throughout the paper we denote
\begin{equation*}
(f)_{\Omega} = \frac{1}{|\Omega|} \int_{\Omega} f(x) \, dx
= \dashint_{\Omega} f(x) \, dx,
\end{equation*}
where $|\Omega|$ is the
$d$-dimensional Lebesgue measure of $\Omega$.

\begin{lemma}
                            \label{lem3.1}
Let $\lambda \ge 0$, $0<\sigma<2$, and $f \in C^\infty_{\text{loc}}\cap L_{\infty}(\bR^d)$ satisfying $f=0$ in $B_2$.
Let $u \in H_2^\sigma \cap C^{\infty}_{b}(\bR^d)$ satisfy
\begin{equation}
                            \label{eq13.43}
Lu - \lambda u = f\quad\text{in}\,\,\bR^d.
\end{equation}
Then for all $\alpha \in (0, \min\{1,\sigma\})$,
\begin{align}
                    \label{eq14.05}
[u]_{C^{\alpha}(B_{1/2})} &\le N\sum_{k=0}^{\infty} 2^{-k\sigma} (|u|)_{B_{2^k}},\\
                    \label{eq14.08}
[(-\Delta)^{\sigma/2}u]_{C^{\alpha}(B_{1/2})} &\le N
\left(\sum_{k=0}^{\infty} 2^{-k\sigma} (|(-\Delta)^{\sigma/2}u|)_{B_{2^k}} + \cM f(0)\right),
\end{align}
where $N=N(d,\nu,\Lambda,\sigma, \alpha)$.
\end{lemma}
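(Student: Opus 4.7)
The plan is to deduce both bounds from Corollary~\ref{cor2.3}, applied respectively to $u$ and to its fractional derivative $v:=(-\Delta)^{\sigma/2}u$, together with a dyadic decomposition of the weighted $L_1$ norm. For \eqref{eq14.05}, since $f\equiv 0$ on $B_2\supset B_1$, the term $\osc_{B_1}f$ vanishes, and Corollary~\ref{cor2.3} reduces to
$$
[u]_{C^\alpha(B_{1/2})}\le N\|u\|_{L_1(\bR^d,\omega)}.
$$
Splitting $\bR^d$ into $B_1$ and the annuli $B_{2^k}\setminus B_{2^{k-1}}$ for $k\ge 1$, on which $\omega(x)\le N\,2^{-k(d+\sigma)}$, bounds the right-hand side by $N\sum_{k\ge 0}2^{-k\sigma}(|u|)_{B_{2^k}}$ and gives \eqref{eq14.05}.

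For \eqref{eq14.08}, the key observation is that both $L$ and $(-\Delta)^{\sigma/2}$ are translation-invariant Fourier multipliers and hence commute, so applying $(-\Delta)^{\sigma/2}$ to \eqref{eq13.43} produces $Lv-\lambda v = g$ with $g:=(-\Delta)^{\sigma/2}f$. Since $u\in H_2^\sigma\cap C_b^\infty$, the function $v$ lies in $C^2_{\text{loc}}\cap L_1(\bR^d,\omega)$ (the weighted integrability follows from $v\in L_2$ and $\omega\in L_2$ via Cauchy--Schwarz). Corollary~\ref{cor2.3} applied to $v$ on $B_1$ then gives
$$
[v]_{C^\alpha(B_{1/2})}\le N\|v\|_{L_1(\bR^d,\omega)} + N\osc_{B_1}g,
$$
whose first term is treated exactly as above. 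To bound $\osc_{B_1}g$, note that $f$ vanishes on $B_2$, so for $x\in B_1$ both $f(x)=0$ and $f(x+y)=0$ whenever $|x+y|<2$, whence
$$
g(x)=-c^{-1}\int_{|z|\ge 2}f(z)\,|z-x|^{-d-\sigma}\,dz,
$$
with $c$ from \eqref{eq1222}. Since $|z-x|\ge |z|/2$ for $x\in B_1$ and $|z|\ge 2$, a dyadic decomposition of the last integral over the annuli $B_{2^{k+1}}\setminus B_{2^k}$ yields
$$
|g(x)|\le N\sum_{k\ge 1}2^{-k\sigma}(|f|)_{B_{2^{k+1}}}\le N\cM f(0),
$$
uniformly in $x\in B_1$. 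Therefore $\osc_{B_1}g\le 2\sup_{B_1}|g|\le N\cM f(0)$, and \eqref{eq14.08} follows.

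The only delicate point is the regularity of $v$ on $\bR^d$ and the legitimacy of the pointwise commutation $(-\Delta)^{\sigma/2}Lu = L(-\Delta)^{\sigma/2}u$; both are consequences of the $C_b^\infty$ assumption on $u$, which ensures that the relevant singular integrals converge absolutely at infinity and in the principal-value sense at the origin for each of the three ranges $\sigma\in(0,1)$, $\sigma=1$, and $\sigma\in(1,2)$. Everything else is bookkeeping: applying Corollary~\ref{cor2.3}, the dyadic split for weighted $L_1$ norms, and the elementary tail estimate on $g$.
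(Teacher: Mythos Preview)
Your proof is correct and follows essentially the same approach as the paper: both parts invoke Corollary~\ref{cor2.3} (for $u$ and for $(-\Delta)^{\sigma/2}u$ respectively), bound the weighted $L_1$ norm by a dyadic sum of averages, and control $(-\Delta)^{\sigma/2}f$ on $B_1$ by a tail integral dominated by $\cM f(0)$. Your change-of-variables representation of $g$ and the paper's $|y|>1/2$ integral are equivalent, and your explicit remarks on the commutation and on $v\in L_1(\bR^d,\omega)$ via Cauchy--Schwarz are welcome justifications that the paper leaves implicit.
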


Note that the right-hand side of \eqref{eq14.05} and the first term on the right-hand side of \eqref{eq14.08} are bounded by $\cM u(0)$ and $\cM((-\Delta)^{\sigma/2}u)(0)$, respectively. Therefore, Lemma \ref{lem3.1} implies that the local H\"older norms of $u$ and its fractional derivative $(-\Delta)^{\sigma/2}u$ can be controlled by the maximal functions of $u$, $(-\Delta)^{\sigma/2}u$, and $f$. This enables us to adapt the approach in \cite{Krylov_2005}.

\begin{proof}[Proof of Lemma \ref{lem3.1}]
First note that we have
$u,(-\Delta)^{\sigma/2}u\in C^2_{\text{loc}}(B_1)\cap L_1(\bR^d,\omega)$ with $\omega(x) = 1/(1+|x|^{d+\sigma})$.
Since $f=0$ in $B_2$, by Corollary \ref{cor2.3},
\begin{equation}
                                    \label{eq14.11}
[u]_{C^{\alpha}(B_{1/2})}
\le N \|u\|_{L_1(\bR^d, \omega)}.
\end{equation}
Set
$$
B_{(0)} = B_1,
\quad
B_{(k)} = B_{2^k}\setminus B_{2^{k-1}}, \quad k \ge 1.
$$
Note that
\begin{align*}
\|u\|_{L_1(\bR^d, \omega)}
&= \int_{\bR^d} |u(x)| \frac{1}{1+|x|^{d+\sigma}} \, dx\\
&= \sum_{k=0}^{\infty}\int_{B_{(k)}} |u(x)| \frac{1}{1+|x|^{d+\sigma}} \, dx\\
&\le N\sum_{k=0}^{\infty} 2^{-k\sigma} (|u|)_{B_{2^k}}.
\end{align*}
This together with \eqref{eq14.11} gives \eqref{eq14.05}.

To prove \eqref{eq14.08}, we apply $(-\Delta)^{\sigma/2}$ to the both sides of \eqref{eq13.43} and obtain
$$
(L-\lambda)(-\Delta)^{\sigma/2}u = (-\Delta)^{\sigma/2}f.
$$
Again by Corollary \ref{cor2.3},
\begin{equation}
                                    \label{eq13.45}
[(-\Delta)^{\sigma/2}u]_{C^{\alpha}(B_{1/2})}
\le N \|(-\Delta)^{\sigma/2}u\|_{L_1(\bR^d, \omega)}
+ N \sup_{B_1}|(-\Delta)^{\sigma/2}f|.
\end{equation}
In exactly the same way above, we bound the first term on the right-hand side of \eqref{eq13.45} by
$$
N\sum_{k=0}^{\infty} 2^{-k\sigma} (|(-\Delta)^{\sigma/2}u|)_{B_{2^k}}.
$$
Next we estimate the second term on the right-hand side of \eqref{eq13.45}.
For $|x|<1$, we have
\begin{align}
\left|-(-\Delta)^{\sigma/2} f(x)\right|
&= \frac 1 c\left|\text{P.V.} \int_{\bR^d} \left(f(x+y)-f(x)\right) \frac 1 {|y|^{d+\sigma}}\, dy\right|\nonumber\\
                        \label{eq13.57}
&\le N \int_{|y|>1/2} |f(x+y)|\frac{1}{|y|^{d+\sigma}+1} \, dy,
\end{align}
where the inequality above is due to the fact that
$$
f(x) = 0 \quad \text{if} \quad |x| < 2,
\quad
f(x+y) = 0 \quad \text{if} \quad |x| < 1, \,\, |y| < 1/2.
$$
Similar to the estimate of $\|u\|_{L_1(\bR^d, \omega)}$ above, we bound the right-hand side of \eqref{eq13.57} by
$$
N\sum_{k=0}^{\infty} 2^{-k\sigma} (|f|)_{B_{2^k}}\le N(d,\sigma)\cM f(0).$$ The lemma is proved.
\end{proof}

By using a simple scaling argument, we obtain the following corollary.

\begin{corollary}
Let $\lambda \ge 0$, $0 < \sigma < 2$, $r>0$, $\kappa\ge 2$,
and $f \in C^\infty_{\text{loc}}\cap L_{\infty}(\bR^d)$ satisfying $f=0$ in $B_{2\kappa r}$.
Let $u \in H_2^\sigma \cap C^{\infty}_{b}(\bR^d)$ satisfy
\begin{equation*}
Lu - \lambda u = f \quad\text{in}\,\,\bR^d.
\end{equation*}
Then for all $\alpha \in (0, \min\{1,\sigma\})$,
$$
[u]_{C^{\alpha}(B_{\kappa r/2})} \le N (\kappa r)^{-\alpha}\sum_{k=0}^{\infty} 2^{-k\sigma} (|u|)_{B_{2^k \kappa r}},
$$
$$
[(-\Delta)^{\sigma/2}u]_{C^{\alpha}(B_{\kappa r/2})} \le N (\kappa r)^{-\alpha}
\left(\sum_{k=0}^{\infty} 2^{-k\sigma} (|(-\Delta)^{\sigma/2}u|)_{B_{2^k \kappa r}} + \cM f(0)\right),
$$
where $N=N(d,\nu,\Lambda,\sigma, \alpha)$.	
\end{corollary}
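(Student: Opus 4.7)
The plan is a direct scaling reduction to Lemma \ref{lem3.1}. Set $\rho := \kappa r$, let $v(x) := u(\rho x)$, and introduce the rescaled kernel $\tilde K(z) := \rho^{d+\sigma} K(\rho z)$. A direct check shows that $\tilde K$ satisfies the ellipticity bounds \eqref{eq1211} with the same constants $\nu, \Lambda$, and in the case $\sigma = 1$ it also inherits the cancellation condition \eqref{eq21.47} from $K$ (via the change of variables $y \mapsto \rho y$ on the spheres $\partial B_r$). Hence the associated operator $\tilde L$ belongs to the same class as $L$, and the constants produced by Lemma \ref{lem3.1} applied with $\tilde L$ in place of $L$ depend only on $(d,\nu,\Lambda,\sigma,\alpha)$ and not on $\rho$.

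The key computation is that, via the substitution $z = \rho y$ inside $\tilde L v(x)$, one obtains $\tilde L v(x) = \rho^\sigma (Lu)(\rho x)$. For $\sigma \ne 1$ this is immediate since $\chi^{(\sigma)}$ is constant, and for $\sigma = 1$ the mismatch between $\chi^{(1)}(z/\rho)$ and $\chi^{(1)}(z)$ costs nothing by the cancellation condition on $\tilde K$ (equivalently, by the remark after \eqref{eq21.47} that $\chi^{(1)}$ may be replaced by $1_{B_r}$ for any $r>0$). Combined with $Lu - \lambda u = f$ evaluated at $\rho x$, this yields
\[
\tilde L v - (\rho^\sigma \lambda)\, v = g \quad \text{in } \bR^d,\qquad g(x) := \rho^\sigma f(\rho x),
\]
and the hypothesis $f \equiv 0$ on $B_{2\rho}$ becomes $g \equiv 0$ on $B_2$, so Lemma \ref{lem3.1} applies to $v$ (with $\tilde L$).

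Finally I would scale back using the elementary identities
\[
[v]_{C^\alpha(B_{1/2})} = \rho^\alpha [u]_{C^\alpha(B_{\rho/2})},\qquad (|v|)_{B_{2^k}} = (|u|)_{B_{2^k\rho}},
\]
\[
(-\Delta)^{\sigma/2} v(x) = \rho^\sigma (-\Delta)^{\sigma/2} u(\rho x),\qquad \cM g(0) = \rho^\sigma \cM f(0).
\]
Inserting these into the two estimates of Lemma \ref{lem3.1} (for $v$ and for $(-\Delta)^{\sigma/2} v$, respectively) yields precisely the two bounds of the corollary, with the overall factor $(\kappa r)^{-\alpha}$ coming from $\rho^{-\alpha}$. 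There is no genuine obstacle here; the only points requiring brief care are the invariance of the class of admissible kernels under the dilation $K \mapsto \tilde K$ (in particular the preservation of \eqref{eq21.47} when $\sigma = 1$) and the book-keeping of the $\rho$-powers generated by the scalings of $L$, $(-\Delta)^{\sigma/2}$, averaging, and the H\"older seminorm.
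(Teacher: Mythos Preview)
Your proposal is correct and follows exactly the same scaling argument as the paper: set $R=\kappa r$, rescale $u$, $f$, and the kernel $K$ by the dilation $x\mapsto Rx$, verify the rescaled equation satisfies the hypotheses of Lemma~\ref{lem3.1}, and then undo the scaling using precisely the identities you list. The paper's proof is in fact terser than yours; your additional remarks about the preservation of \eqref{eq21.47} under dilation and the bookkeeping of $\rho$-powers are correct and simply make explicit what the paper leaves to the reader.
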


\begin{proof}
Let $R = \kappa r$, $w(x) = u(Rx)$, and $g(x) = R^\sigma f(Rx)$.
Set $L_1$ to be a non-local operator with the kernel $K_1(z) = R^{d+\sigma} K(Rz)$.
Then we see that $K_1$ satisfies \eqref{eq1211} and
$w \in L_1(\bR^d,\omega)$. 
Moreover,
$$
L_1 w - R^\sigma \lambda w = g \quad\text{in}\,\,\bR^d,
$$
where $g=0$ in $B_2$.
Applying Lemma \ref{lem3.1} to $w$, we obtain \eqref{eq14.05} and \eqref{eq14.08} with $w$ in place of $u$.
Turning $w$ back to $u$ gives the desired inequalities.
\end{proof}

Note that, for example,
$$
\left(|u-(u)_{B_r}|\right)_{B_r}
\le 2^\alpha r^\alpha [u]_{C^\alpha(B_{\kappa r/2})}
$$
for $\kappa \ge 2$.
This combined with the inequalities in the above corollary leads us to

\begin{corollary}
                            \label{cor3.2}
Let $\lambda \ge 0$, $0 < \sigma < 2$, $r>0$, $\kappa\ge 2$,
and $f \in C^\infty_{\text{loc}}\cap L_{\infty}(\bR^d)$ satisfying $f=0$ in $B_{2\kappa r}$.
Let $u \in H_2^\sigma \cap C^{\infty}_{b}(\bR^d)$ satisfy
\begin{equation*}
Lu - \lambda u = f \quad\text{in}\,\,\bR^d.
\end{equation*}
Then for all $\alpha \in (0, \min\{1,\sigma\})$,
\begin{align*}
\big(|u-(u)_{B_r}|\big)_{B_r} \le N\kappa^{-\alpha}\sum_{k=0}^{\infty} 2^{-k\sigma} (|u|)_{B_{2^k\kappa r}},
\end{align*}
\begin{multline*}
\big(|(-\Delta)^{\sigma/2}u-((-\Delta)^{\sigma/2}u)_{B_r}|\big)_{B_r}\\
\le N\kappa^{-\alpha}
\Big( \sum_{k=0}^{\infty} 2^{-k\sigma} \big(|(-\Delta)^{\sigma/2}u|\big)_{B_{2^k\kappa  r}} + \cM f(0)\Big),
\end{multline*}
where $N=N(d,\nu,\Lambda,\sigma, \alpha)$.
\end{corollary}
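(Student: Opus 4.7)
The plan is to combine the $C^\alpha$ seminorm bounds from the preceding corollary with the elementary observation relating mean oscillation on a ball to the H\"older seminorm on a slightly larger ball. Since $\kappa \ge 2$, we have $B_r \subset B_{\kappa r/2}$, so for any continuous function $v$ on $B_{\kappa r/2}$,
\begin{equation*}
\big(|v-(v)_{B_r}|\big)_{B_r}
\le \dashint_{B_r}\dashint_{B_r}|v(x)-v(y)|\,dx\,dy
\le (2r)^{\alpha}[v]_{C^\alpha(B_{\kappa r/2})}.
\end{equation*}
This gives an extra factor $r^\alpha$ on the right-hand side.

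I will apply this inequality to $v=u$ and $v=(-\Delta)^{\sigma/2}u$, and then invoke the previous (unnumbered) corollary, which under exactly the assumptions $f=0$ in $B_{2\kappa r}$ and $Lu-\lambda u=f$ in $\bR^d$ yields
\begin{equation*}
[u]_{C^\alpha(B_{\kappa r/2})} \le N(\kappa r)^{-\alpha}\sum_{k=0}^\infty 2^{-k\sigma}(|u|)_{B_{2^k\kappa r}},
\end{equation*}
\begin{equation*}
[(-\Delta)^{\sigma/2}u]_{C^\alpha(B_{\kappa r/2})} \le N(\kappa r)^{-\alpha}\Big(\sum_{k=0}^\infty 2^{-k\sigma}(|(-\Delta)^{\sigma/2}u|)_{B_{2^k\kappa r}} + \cM f(0)\Big).
\end{equation*}
Multiplying these by the factor $(2r)^\alpha$ from the oscillation inequality and absorbing the constant $2^\alpha$ into $N$, the powers of $r$ cancel and leave precisely the factor $\kappa^{-\alpha}$ asserted in the corollary.

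There is essentially no obstacle here: the corollary is a direct packaging of the H\"older estimate into the form of a mean oscillation estimate, which is exactly the shape needed to feed into the Fefferman--Stein sharp function argument in the next section. The only things to check are that $B_r \subset B_{\kappa r/2}$ (true since $\kappa\ge 2$) and that the hypotheses of the preceding corollary are identical to those here, which they are. No further calculation is required.
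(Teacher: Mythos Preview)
Your proposal is correct and follows exactly the paper's approach: the paper also notes $\left(|u-(u)_{B_r}|\right)_{B_r}\le 2^\alpha r^\alpha [u]_{C^\alpha(B_{\kappa r/2})}$ for $\kappa\ge 2$ and combines this with the preceding corollary to obtain the stated inequalities.
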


The proposition below is the main result of this section. It reads that the mean oscillations of $u$ and $(-\Delta)^{\sigma/2}u$ can be controlled by their maximal functions together with the maximal function of $f^2$.

\begin{proposition}[Mean oscillation estimate]
                            \label{prop3.3}
Let $\lambda > 0$, $0 <  \sigma < 2$, $r>0$, $\kappa\ge 2$,
and $f \in C^\infty_{\text{loc}}\cap L_{\infty}$.
Let $u \in H_2^\sigma \cap C^{\infty}_{b}(\bR^d)$ satisfy
\begin{equation*}
Lu - \lambda u = f \quad\text{in}\,\,\bR^d.
\end{equation*}
Then for all $\alpha \in (0, \min\{1,\sigma\})$,
\begin{multline}
                                \label{eq15.33}
\lambda\big(|u-(u)_{B_r}|\big)_{B_r} +
\big(|(-\Delta)^{\sigma/2}u-((-\Delta)^{\sigma/2}u)_{B_r}|\big)_{B_r}\\
\le N\kappa^{-\alpha}
\Big( \lambda \cM u (0)+\cM((-\Delta)^{\sigma/2}u)(0) \Big)+N\kappa^{d/2}\big(\cM(f^2)(0)\big)^{1/2},
\end{multline}
where $N=N(d,\nu,\Lambda,\sigma, \alpha)$.
\end{proposition}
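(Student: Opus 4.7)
The plan is to split both the forcing and the solution into near and far pieces relative to the ball $B_{2\kappa r}$ and apply a different tool to each. Choose a smooth cutoff $\eta \in C_0^\infty$ with $\eta \equiv 1$ on $B_{3\kappa r/2}$ and $\eta \equiv 0$ outside $B_{2\kappa r}$, and set $f_1 = \eta f$, $f_2 = f - f_1$. By Proposition \ref{prop2.5} there exists a unique $u_1 \in H_2^\sigma$ solving $L u_1 - \lambda u_1 = f_1$ in $\bR^d$, so that $u_2 := u - u_1 \in H_2^\sigma$ satisfies $L u_2 - \lambda u_2 = f_2$ with $f_2 \equiv 0$ on $B_{3\kappa r/2}$. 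The left-hand side of \eqref{eq15.33} is then estimated termwise via the triangle inequality, handling $u_1$ and $u_2$ separately.

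For the near piece $u_1$, the $L_2$-estimate \eqref{eq24.08.55} yields
$$
\lambda \|u_1\|_{L_2} + \|(-\Delta)^{\sigma/2} u_1\|_{L_2} \le N \|f_1\|_{L_2}.
$$
Since $\operatorname{supp} f_1 \subset B_{2\kappa r}$, Cauchy--Schwarz gives $\|f_1\|_{L_2}^2 \le N(\kappa r)^d \cM(f^2)(0)$. Combining this with the trivial bound $(|v|)_{B_r} \le |B_r|^{-1/2} \|v\|_{L_2}$, which holds for any $v \in L_2$, we obtain
$$
\lambda (|u_1 - (u_1)_{B_r}|)_{B_r} + (|(-\Delta)^{\sigma/2} u_1 - ((-\Delta)^{\sigma/2} u_1)_{B_r}|)_{B_r} \le N \kappa^{d/2} (\cM(f^2)(0))^{1/2},
$$
which accounts for the last term in \eqref{eq15.33}.

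For the far piece $u_2$, apply Corollary \ref{cor3.2} (with a harmless adjustment of $\kappa$ to absorb the factor $3/4$ coming from the cutoff). This delivers a factor $\kappa^{-\alpha}$ multiplying tail sums $\sum_{k\ge 0} 2^{-k\sigma}(|u_2|)_{B_{2^k \kappa r}}$ and $\sum_{k\ge 0} 2^{-k\sigma}(|(-\Delta)^{\sigma/2} u_2|)_{B_{2^k \kappa r}}$, plus $\cM f_2(0)$ in the second estimate. Substitute $u_2 = u - u_1$ in each tail sum. The contribution from $u$ is at most $\cM u(0) \sum_k 2^{-k\sigma} \le N \cM u(0)$, and similarly for $\cM((-\Delta)^{\sigma/2} u)(0)$. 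The contribution from $u_1$ is controlled again by Cauchy--Schwarz: each average $(|u_1|)_{B_{2^k\kappa r}}$ is bounded by $(2^k \kappa r)^{-d/2} \|u_1\|_{L_2}$, and summing the geometric series $\sum_k 2^{-k(\sigma + d/2)}$ yields terms proportional to $(\kappa r)^{-d/2} \|u_1\|_{L_2}$ and $(\kappa r)^{-d/2} \|(-\Delta)^{\sigma/2} u_1\|_{L_2}$, both of which fold into $(\cM(f^2)(0))^{1/2}$ by the $L_2$-estimate and the bound on $\|f_1\|_{L_2}$. Finally $\cM f_2(0) \le \cM f(0) \le (\cM(f^2)(0))^{1/2}$. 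Summing the two contributions produces \eqref{eq15.33}.

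The main obstacle is the bookkeeping of $\kappa$-powers: one has to check that the $L_2$-energy estimate on $u_1$ contributes only to the $N \kappa^{d/2} (\cM(f^2)(0))^{1/2}$ term while Corollary \ref{cor3.2} applied to $u_2$ cleanly provides the prefactor $\kappa^{-\alpha}$ in front of both $\lambda \cM u(0)$ and $\cM((-\Delta)^{\sigma/2} u)(0)$. A minor technical subtlety is that Corollary \ref{cor3.2} presupposes $H_2^\sigma \cap C_b^\infty$ regularity, whereas $u_1$ from Proposition \ref{prop2.5} is a priori only in $H_2^\sigma$; this is bypassed by approximating $f_1$ in $L_2$ by $C_0^\infty$ functions and passing to the limit, since all estimates used on $u_1$ are $L_2$-type and stable under this approximation.
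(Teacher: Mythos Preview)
Your proposal is correct and follows essentially the same strategy as the paper: split $f$ via a cutoff supported near $B_{\kappa r}$, solve for the near piece with the $L_2$-estimate, apply Corollary \ref{cor3.2} to the far piece, and reassemble using \eqref{eq15.46} at each scale $2^k\kappa r$. The only cosmetic differences are your cutoff radii (the paper takes $\eta\equiv 1$ on $B_{2\kappa r}$ and $\operatorname{supp}\eta\subset B_{4\kappa r}$, which matches Corollary \ref{cor3.2} without any adjustment of $\kappa$) and the regularity point: since $f\in C^\infty_{\text{loc}}$ and $\eta\in C_0^\infty$ one has $\eta f\in C_0^\infty$, so the paper simply invokes classical theory to conclude $u_1\in H_2^\sigma\cap C_b^\infty$ directly, making your approximation step unnecessary.
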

\begin{proof}
Take a cut-off function $\eta\in C_0^\infty(B_{4\kappa r})$ such that $\eta=1$ in $B_{2\kappa r}$. Due to Proposition \ref{prop2.5}, there is a unique $H_2^\sigma$-solution to
$$
Lw-\lambda w=\eta f.
$$
Since $\eta f\in C_0^\infty$, by the classical theory, we know that $w\in H^\sigma_2 \cap C^{\infty}_{b}$. It follows from Lemma \ref{thm1.2} that
$$
 \lambda \|w\|_{L_2}+
\|(-\Delta)^{\sigma/2}w\|_{L_2} \le N(d,\nu) \|\eta f\|_{L_2},
$$
which yields, for any $R>0$,
\begin{multline}
                                    \label{eq15.46}
\Big(\lambda|w|+|(-\Delta)^{\sigma/2}w|\Big)_{B_{R}}
\le N(R^{-1}\kappa r)^{d/2}(f^2)_{B_{4\kappa r}}^{1/2}\\
\le N(R^{-1}\kappa r)^{d/2}\big(\cM(f^2)(0)\big)^{1/2}.
\end{multline}
Now $v:=u-w\in H_2^\sigma \cap C^{\infty}_{b}$ satisfies
$$
Lv-\lambda v=(1-\eta) f.
$$
Notice that $(1-\eta)f=0$ in $B_{2\kappa r}$.
By Corollary \ref{cor3.2}, we have
\begin{multline*}
\lambda\big(|v-(v)_{B_r}|\big)_{B_r}+ \big(|(-\Delta)^{\sigma/2}v-((-\Delta)^{\sigma/2}v)_{B_r}|\big)_{B_r}\\
\le  N\lambda\kappa^{-\alpha}\sum_{k=0}^{\infty} 2^{-k\sigma}
(|v|)_{B_{2^k \kappa r}}+
 N\kappa^{-\alpha}
\Big( \sum_{k=0}^{\infty} 2^{-k\sigma} (|(-\Delta)^{\sigma/2}v|)_{B_{2^k \kappa r}} + \cM f(0)\Big).
\end{multline*}
This together with the triangle inequality, \eqref{eq15.46}, and the inequality $\cM f(0)\le \big(\cM(f^2)(0)\big)^{1/2}$  gives
\begin{align*}
&\lambda\big(|u-(u)_{B_r}|\big)_{B_r} +
\big(|(-\Delta)^{\sigma/2}u-((-\Delta)^{\sigma/2}u)_{B_r}|\big)_{B_r}\\
&\le \lambda\big(|v-(v)_{B_r}|\big)_{B_r} +
\big(|(-\Delta)^{\sigma/2}v-((-\Delta)^{\sigma/2}v)_{B_r}|\big)_{B_r}\\
&\qquad +N\lambda\big(|w|\big)_{B_r} +
N\big(|(-\Delta)^{\sigma/2}w|\big)_{B_r}\\
&\le  N\kappa^{-\alpha}\sum_{k=0}^{\infty} 2^{-k\sigma}
\Big(\lambda|v|+|(-\Delta)^{\sigma/2}v|\Big)_{B_{2^k\kappa r}} + N\kappa^{d/2}
\big(\cM(f^2)(0)\big)^{1/2}\\
&\le  N\kappa^{-\alpha}\sum_{k=0}^{\infty} 2^{-k\sigma}
\Big(\lambda|u|+|(-\Delta)^{\sigma/2}u|\Big)_{B_{2^k \kappa r}}+ N\kappa^{d/2}
\big(\cM(f^2)(0)\big)^{1/2},
\end{align*}
which is  clearly less than the right-hand side of \eqref{eq15.33}. In the last inequality above, we used \eqref{eq15.46} with $R=2^k \kappa r,k=0,1,\ldots$. The proposition is proved.
\end{proof}

Next, we show that the inequality \eqref{eq15.33} holds true if we interchange the roles of $-(-\Delta)^{\sigma/2}$ and $L$.

\begin{lemma}
                            \label{lem3.4}
Let $\lambda > 0$, $0 <  \sigma < 2$, $r>0$, $\kappa\ge 2$,
and $f \in C^\infty_{\text{loc}}\cap L_{\infty}$.
Let $u \in H_2^\sigma \cap C^{\infty}_{b}(\bR^d)$ satisfy
\begin{equation}
                            \label{eq17.4.15}
-(-\Delta)^{\sigma/2}u - \lambda u = f \quad\text{in}\,\,\bR^d.
\end{equation}
Then for all $\alpha \in (0, \min\{1,\sigma\})$,
\begin{multline*}
\lambda\big(|u-(u)_{B_r}|\big)_{B_r} +
\big(|Lu-(Lu)_{B_r}|\big)_{B_r}\\
\le N\kappa^{-\alpha}
\Big( \lambda \cM u (0)+\cM(Lu)(0) \Big)+N\kappa^{d/2}\big(\cM(f^2)(0)\big)^{1/2},
\end{multline*}
where $N=N(d,\nu,\Lambda,\sigma, \alpha)$.
\end{lemma}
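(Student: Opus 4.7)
The plan is to mimic the proof of Proposition \ref{prop3.3} while interchanging the roles of $L$ and $-(-\Delta)^{\sigma/2}$. I first pick a cutoff $\eta\in C_0^\infty(B_{4\kappa r})$ with $\eta=1$ on $B_{2\kappa r}$ and split $f=\eta f+(1-\eta)f$. Because the fractional Laplacian is a special case of the operators covered by Proposition \ref{prop2.5}, there will be a unique $H_2^\sigma$-solution $w$ to $-(-\Delta)^{\sigma/2}w-\lambda w=\eta f$, and since $\eta f\in C_0^\infty$, the standard regularity theory will give $w\in H_2^\sigma\cap C_b^\infty$. From Lemma \ref{thm1.2} applied to the fractional Laplacian, $\lambda\|w\|_{L_2}+\|(-\Delta)^{\sigma/2}w\|_{L_2}\le N\|\eta f\|_{L_2}$, and the continuity estimate \eqref{eq0101z} proven there also yields $\|Lw\|_{L_2}\le N\|w\|_{\dot H_2^\sigma}\le N\|\eta f\|_{L_2}$. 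Cauchy--Schwarz will then produce the analogue of \eqref{eq15.46},
\begin{equation*}
(\lambda|w|+|Lw|)_{B_R}\le N(R^{-1}\kappa r)^{d/2}(\cM(f^2)(0))^{1/2}\quad\text{for all }R>0.
\end{equation*}

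Next, I set $v=u-w\in H_2^\sigma\cap C_b^\infty$, which solves $-(-\Delta)^{\sigma/2}v-\lambda v=(1-\eta)f$ with right-hand side vanishing on $B_{2\kappa r}$. For $v$ alone, Corollary \ref{cor3.2}, specialized to the kernel of the fractional Laplacian, already gives
\begin{equation*}
(|v-(v)_{B_r}|)_{B_r}\le N\kappa^{-\alpha}\sum_{k=0}^\infty 2^{-k\sigma}(|v|)_{B_{2^k\kappa r}}.
\end{equation*}
For $Lv$ I would use that $L$ and $(-\Delta)^{\sigma/2}$ are translation-invariant Fourier multipliers and therefore commute, so $Lv$ solves
\begin{equation*}
-(-\Delta)^{\sigma/2}(Lv)-\lambda(Lv)=L((1-\eta)f).
\end{equation*}
Since $(1-\eta)f$ \emph{and} its gradient both vanish on $B_{2\kappa r}$, the definition of $L$ reduces, at any $x\in B_{\kappa r}$, to a single integral over $|y|\ge\kappa r$, and a direct dyadic decomposition using only the upper bound in \eqref{eq1211} will give
\begin{equation*}
|L((1-\eta)f)(x)|\le N\int_{|y|\ge\kappa r}\frac{|f(x+y)|}{|y|^{d+\sigma}}\,dy\le N(\kappa r)^{-\sigma}\cM f(0)
\end{equation*}
uniformly in $x\in B_{\kappa r}$. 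A scaled version of Corollary \ref{cor2.3} applied to $Lv$, exactly in the manner of the proof of Lemma \ref{lem3.1}, will then yield
\begin{equation*}
(|Lv-(Lv)_{B_r}|)_{B_r}\le N\kappa^{-\alpha}\Big(\sum_{k=0}^\infty 2^{-k\sigma}(|Lv|)_{B_{2^k\kappa r}}+\cM f(0)\Big).
\end{equation*}

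Finally, I would assemble the estimate using $u=v+w$: the triangle inequality bounds the mean oscillations of $u$ and $Lu$ by those of $v$ and $Lv$ plus $(|w|)_{B_r}$ and $(|Lw|)_{B_r}$, while inside the dyadic sums $(|v|)_{B_{2^k\kappa r}}\le(|u|)_{B_{2^k\kappa r}}+(|w|)_{B_{2^k\kappa r}}$ and likewise for $Lv$. The $u$-sums collapse to $\cM u(0)$ and $\cM(Lu)(0)$, the $w$- and $Lw$-contributions are absorbed into the $\kappa^{d/2}(\cM(f^2)(0))^{1/2}$ term via the first step (choosing $R=r$ for the isolated averages and $R=2^k\kappa r$ for the sums, the latter giving a summable $2^{-kd/2}$ tail), and $\cM f(0)\le(\cM(f^2)(0))^{1/2}\le\kappa^{d/2}(\cM(f^2)(0))^{1/2}$ since $\kappa\ge 2$. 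The main technical point will be the pointwise bound on $L((1-\eta)f)$ on $B_{\kappa r}$; it works cleanly precisely because $(1-\eta)f$ and $\nabla((1-\eta)f)$ both vanish there, so the singular integral defining $L$ near $y=0$ causes no trouble, and the cancellation condition \eqref{eq21.47} at $\sigma=1$ is not needed for this step.
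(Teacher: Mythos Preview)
Your proposal is correct and follows essentially the same route as the paper: decompose $u=v+w$ via the cutoff $\eta$, control $w$ and $Lw$ in $L_2$ using Lemma~\ref{thm1.2} together with the continuity estimate \eqref{eq0101z}, commute $L$ with the fractional Laplacian so that $Lv$ solves a fractional-Laplacian equation with right-hand side $L((1-\eta)f)$, bound this right-hand side pointwise on $B_{\kappa r}$ using that $(1-\eta)f$ and its gradient vanish there, and then invoke (a scaled) Corollary~\ref{cor2.3} exactly as in the proof of Lemma~\ref{lem3.1}. The only cosmetic difference is that the paper states the analogue of \eqref{eq14.08} at unit scale first and then scales, whereas you work directly at scale $\kappa r$; your observation that the cancellation condition \eqref{eq21.47} is not used in the pointwise bound on $L((1-\eta)f)$ is also implicit in the paper's computation.
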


\begin{proof}
We follow the proof of Proposition \ref{prop3.3} with necessary changes outlined below. As before, we decompose $u$ as a sum of $w$ and $v$. For the estimate of $w$ corresponding to \eqref{eq15.46}, by using \eqref{eq0101z} and \eqref{eq24.08.55} we have
$$
\Big(\lambda|w|+|Lw|\Big)_{B_{R}}
\le N(R^{-1}\kappa r)^{d/2}\big(\cM(f^2)(0)\big)^{1/2}.
$$
Since the operator $L$ in Lemma \ref{lem3.1} can be set to be $(-\Delta)^{\sigma/2}$, one can still use \eqref{eq14.05} for the H\"older estimate of $v$.
Now for the H\"older estimate of $Lv$, we need an estimate similar to \eqref{eq14.08}:
$$
[Lu]_{C^{\alpha}(B_{1/2})} \le N
\left(\sum_{k=0}^{\infty} 2^{-k\sigma} (|Lu|)_{B_{2^k}} + \cM f(0)\right)
$$
provided that $f=0$ in $B_2$.
We apply $L$ to the both sides of \eqref{eq17.4.15} and obtain
$$
((-\Delta)^{\sigma/2}-\lambda)Lu = Lf.
$$
By Corollary \ref{cor2.3},
\begin{equation}
                                    \label{eq13.45z}
[Lu]_{C^{\alpha}(B_{1/2})}
\le N \|Lu\|_{L_1(\bR^d, \omega)}
+ N \sup_{B_1}|Lf|.
\end{equation}
We bound the first term on the right-hand side of \eqref{eq13.45z} as in the proof of Lemma \ref{lem3.1}. To estimate the second term, we notice that since $f=0$ in $B_2$, for any $|x|<1$ we have $\nabla f(x)=0$, and thus
\begin{align*}
\left|L f(x)\right|&=\left|\int_{\bR^d} \left(f(x+y) - f(x)-y\cdot \nabla f(x)\chi^{(\sigma)}(y)\right)K(y)\, d y\right|\\
&= \left|\int_{\bR^d} \left(f(x+y)-f(x)\right) K(y)\, dy\right|\\
                        \label{eq13.57z}
&\le N \int_{|y|>1/2} |f(x+y)|\frac{1}{|y|^{d+\sigma}+1} \, dy,
\end{align*}
which is bounded by $N\cM f(0)$ as desired. The remaining proof is the same as that of Proposition \ref{prop3.3}.
\end{proof}

\mysection{$L_p$-estimate} \label{L_p}

We finally complete the proof of the $L_p$ solvability of $Lu-\lambda u = f$ by providing the proof of Theorem \ref{mainth01}.

\begin{proof}[Proof of Theorem \ref{mainth01}]
First we prove the estimate \eqref{eq24.09.53} for $u \in C_0^{\infty}$ and $\lambda>0$.
In this case, clearly we have $u \in H_2^\sigma \cap C^{\infty}_{b}(\bR^d)$ and $f\in C^\infty_{\text{loc}}\cap L_{\infty}$.
When $p = 2$, the estimate is proved in Lemma \ref{thm1.2}.

Next we consider the case when $p \in (2,\infty)$.
Set $\alpha = \min\{1, \sigma\}/2$.
Then by Proposition \ref{prop3.3} combined with translations we have, for all $x \in \bR_d$,
$r > 0$ and $\kappa \ge 2$,
\begin{multline*}
\lambda\big(|u-(u)_{B_r(x)}|\big)_{B_r(x)} +
\big(|(-\Delta)^{\sigma/2}u-((-\Delta)^{\sigma/2}u)_{B_r(x)}|\big)_{B_r(x)}\\
\le N\kappa^{-\alpha}
\Big( \lambda \cM u (x)+\cM((-\Delta)^{\sigma/2}u)(x) \Big)+N\kappa^{d/2}\big(\cM(f^2)(x)\big)^{1/2},
\end{multline*}
where $N=N(d,\nu,\Lambda,\sigma)$. Take the supremum of the left-hand side of the inequality with respect to $r > 0$ to get
\begin{multline*}
\lambda u^{\#}(x) + \left((-\Delta)^{\sigma/2}u\right)^{\#}(x)\\
\le N\kappa^{-\alpha}
\Big( \lambda \cM u (x)+\cM((-\Delta)^{\sigma/2}u)(x) \Big)+N\kappa^{d/2}\big(\cM(f^2)(x)\big)^{1/2}.
\end{multline*}
By applying the Fefferman--Stein theorem on sharp functions and the Hardy--Littlewood maximal function theorem to the above inequality (see the inequalities in \eqref{eq1221}), we obtain
\begin{multline*}
\lambda \|u\|_{L_p}
+ \| (-\Delta)^{\sigma/2}u \|_{L_p}
\le N\lambda \|u^\#\|_{L_p}
+ \Big\| \big((-\Delta)^{\sigma/2}u\big)^\# \Big\|_{L_p}\\
\le N \kappa^{-\alpha} \left(\lambda \|\cM u\|_{L_p} + \Big\|\cM\big((-\Delta)^{\sigma/2}u\big)\Big\|_{L_p}\right)
+ N \kappa^{d/2} \|\cM(f^2)\|_{L_{p/2}}^{1/2}\\
\le N \kappa^{-\alpha} \left(\lambda \|u\|_{L_p} + \|(-\Delta)^{\sigma/2}u\|_{L_p}\right)
+ N \kappa^{d/2} \|f\|_{L_p},
\end{multline*}
where $N=N(d,\nu,\Lambda,\sigma,p)$.
It then only remains to take a sufficiently large $\kappa$ so that $N\kappa^{-\alpha} \le 1/2$.
For the case $\lambda = 0$ and $u\in C_0^\infty$, since the estimate \eqref{eq24.09.53} holds for any $\lambda > 0$, we take the limit as $\lambda \searrow 0$.
%

To prove \eqref{eq24.09.53} for general $u\in H_p^\sigma$, we need a continuity estimate of $L$ as in Lemma \ref{thm1.2}.
Thanks to Lemma \ref{lem3.4}, the argument using sharp and maximal functions as above yields, for any $\lambda>0$,
$$
\lambda\|u\|_{L_p}+\|Lu\|_{L_p}
\le N \|-(-\Delta)^{\sigma/2}u - \lambda u\|_{L_p},
$$
with a constant $N$ independent of $\lambda$.
Letting $\lambda\to 0$, we get for any $u\in C_0^\infty$,
\begin{equation}
                                        \label{eq24.10.01b}
\|Lu\|_{L_p}
\le N \|u\|_{\dot H_p^\sigma},
\end{equation}
which implies that $L$ is a continuous operator from $H_p^\sigma$ to $L_p$. Since $C_0^\infty$ is dense in $H_p^\sigma$, we obtain \eqref{eq24.09.53} in its full generality.

Now the unique solvability of the equation {in the case $p\in (2,\infty)$} follows from the same argument as in Proposition \ref{prop2.5} with $p$ in place of $2$ along with Lemma \ref{lem2.3} as well as the estimates \eqref{eq24.10.01b} and \eqref{eq24.09.53}.

For $p \in (1,2)$, we use a duality argument. Let $L^*$ be the non-local operator with kernel $K(-y)$. Denote $q=p/(p-1)\in (2,\infty)$. For any $g\in L_q$, by the $H^\sigma_q$-solvability there is a unique solution $v\in H^\sigma_q$ to the equation
$$
L^*v-\lambda v=g\quad \text{in}\,\,\bR^d.
$$
It is easily seen that $L^*$ is the adjoint operator of $L$. Therefore, for any $u\in C_0^\infty$,
\begin{align}
\int_{\bR^d}g(-\Delta)^{\sigma/2}u \,dx
&=\int_{\bR^d}(L^*v-\lambda v)(-\Delta)^{\sigma/2}u \,dx\nonumber\\
&=\int_{\bR^d}(-\Delta)^{\sigma/2}v (Lu-\lambda u)\,dx. \label{eq17.6.20}
\end{align}
By using \eqref{eq24.09.53} with $q$ in place of $p$, from \eqref{eq17.6.20} we have
\begin{align*}
\big|\int_{\bR^d}g(-\Delta)^{\sigma/2}u \,dx\big|
&\le \|(-\Delta)^{\sigma/2}v\|_{L_q}\|Lu-\lambda u\|_{L_p}\\
&\le N\|g\|_{L_q}\|Lu-\lambda u\|_{L_p}.
\end{align*}
Since $g\in L_q$ is arbitrary, we then get
$$
\|(-\Delta)^{\sigma/2}u\|_{L_p}\le N\|Lu-\lambda u\|_{L_p},
$$
which along with a similar estimate of $\lambda \|u\|_{L_p}$ yields \eqref{eq24.09.53} for any $u\in C_0^\infty$. For general $u\in H_p^\sigma$, as before we need a continuity estimate of $L$. For any $g\in L_q$, let $v\in H^\sigma_q$ be the equation
$$
-(-\Delta)^{\sigma/2}v-\lambda v=g\quad \text{in}\,\,\bR^d.
$$
For any $u\in C_0^\infty$, we have
\begin{align}
\int_{\bR^d}g Lu \,dx
&=\int_{\bR^d}\big(-(-\Delta)^{\sigma/2}v-\lambda v\big)Lu \,dx\nonumber\\
&=\int_{\bR^d}L^*v \big(-(-\Delta)^{\sigma/2}u-\lambda u\big)\,dx. \label{eq17.6.38}
\end{align}
By the continuity of $L^*$, from \eqref{eq17.6.38} we have
\begin{align*}
\big|\int_{\bR^d}gLu \,dx\big|
&\le \|L^*v\|_{L_q}\|-(-\Delta)^{\sigma/2}u-\lambda u\|_{L_p}\\
&\le N\|g\|_{L_q}\|(-\Delta)^{\sigma/2}u-\lambda u\|_{L_p}.
\end{align*}
Since $g\in L_q$ is arbitrary, we then get
$$
\|Lu\|_{L_p}\le N\|(-\Delta)^{\sigma/2}u-\lambda u\|_{L_p}.
$$
Letting $\lambda\to 0$ gives the continuity of $L$ from $H^\sigma_p$ to $L_p$. The rest of the proof is the same as in the case $p\in (2,\infty)$. The theorem is proved.
\end{proof}

\begin{proof}[Proof of the estimate \eqref{eq8.57}]
We take a smooth function $\eta\in C_0^\infty((-2,2))$ satisfying $\eta(t)=1$ for $t\in [-1,1]$. Fix a $T>0$. It is easily seen that $U(t,x):=\eta(t/T)u(x)\in C_0^\infty((-2T,2T)\times\bR^d)$ satisfies
$$
-D_tU(t,x)+LU(t,x)+b\cdot \nabla U(t,x)-\lambda U(t,x)=\eta(t/T)f(x)-u(x)\eta'(t/T)/T.
$$
Define $V(t,x)=U(t,x-bt)$. Then $V\in C_0^\infty((-2T,2T)\times\bR^d)$ and satisfies
$$
-D_tV(t,x)+LV(t,x)-\lambda V(t,x)=\eta(t/T)f(x-bt)-u(x-bt)\eta'(t/T)/T.
$$
It follows from the results in \cite{MP10} combined with the continuity $L$ from $H_p^\sigma$ to $L_p$ proved in Theorem \ref{mainth01} (see Remark \ref{rem11.19}) that
\begin{multline*}
\|(-\Delta)^{\sigma/2}V\|_{L_p(((-2T,2T)\times\bR^d)}+\lambda\|V\|_{L_p(-2T,2T)\times\bR^d)}\\
\le N\|\eta(t/T)f(x-bt)-u(x-bt)\eta'(t/T)/T\|_{L_p((-2T,2T)\times\bR^d)},
\end{multline*}
which implies
$$
\|(-\Delta)^{\sigma/2}u\|_{L_p}+\lambda\|u\|_{L_p}\\
\le N\|f\|_{L_p}+NT^{-1}\lambda\|u\|_{L_p}
$$
with a constant $N=N(d,\nu,\Lambda,\sigma,p)$. Letting $T\to \infty$, we get
$$
\|u\|_{\dot H_p^\sigma} + \sqrt{\lambda}\|u\|_{\dot{H}^{\sigma/2}_p}
+ \lambda \|u\|_{L_p}\le N \|f\|_{L_p}.
$$
To complete the proof, we use the equation \eqref{eq8.56} and \eqref{eq24.10.01b} to bound the $L_p$ norm of $b\cdot \nabla u$ by
$$
\|Lu\|_{L_p}+\lambda\|u\|_{L_p}+ \|f\|_{L_p}\le N \|f\|_{L_p}.
$$
\end{proof}


\mysection{Local estimates}
                                                \label{sec6}
From the global estimate in Theorem \ref{mainth01}, by using a more or less standard localization argument one can obtain the following
interior estimates.
\begin{equation}
                                \label{eq21.06}
\|(-\Delta)^{\sigma/2}u\|_{L_p(B_1)}\le
N\|f\|_{L_p(B_2)}+N\|u\|_{L_p(\bR^d,\omega)}
\end{equation}
for $\sigma\in (0,1)$,
\begin{equation}
                                \label{eq21.06b}
\|(-\Delta)^{\sigma/2}u\|_{L_p(B_1)}\le
N\|f\|_{L_p(B_2)}+N\|u\|_{L_p(\bR^d,\omega)} +N\|Du\|_{L_p(B_4)}
\end{equation}
for $\sigma\in (1,2)$, and
\begin{equation}
                                \label{eq21.06c}
\|(-\Delta)^{\sigma/2}u\|_{L_p(B_1)}\le N\|f\|_{L_p(B_2)}+
N(\varepsilon)\|u\|_{L_p(\bR^d,\omega)}+\varepsilon\|Du\|_{L_p(B_4)}
\end{equation}
for $\sigma=1$ and any $\varepsilon\in (0,1)$. Here the weight
function $\omega$ is defined in Theorem \ref{thm1201}.

For the proof of this claim, we take a cut-off function $\eta\in
C_0^\infty(B_2)$ satisfying $\eta\equiv 1$ on $B_1$. Then it is
easily seen that
$$
L(\eta u)-\lambda \eta u=\eta f+L(\eta u)-\eta Lu.
$$
Applying the global estimate in Theorem \ref{mainth01} to the equation above gives
\begin{multline*}
\|(-\Delta)^{\sigma/2}(\eta u)\|_{L_p(\bR^d)}\le N\|\eta f+L(\eta u)-\eta Lu\|_{L_p(\bR^d)}\\
\le N\|f\|_{L_p(B_2)}+N\|L(\eta u)-\eta Lu\|_{L_p(\bR^d)}.
\end{multline*}
Thus, by the triangle inequality,
\begin{multline}
                        \label{eq21.39}
\|(-\Delta)^{\sigma/2}u\|_{L_p(B_1)}\le \|\eta(-\Delta)^{\sigma/2}u\|_{L_p(\bR^d)}
\le N\|f\|_{L_p(B_2)}\\
+N\|L(\eta u)-\eta Lu\|_{L_p(\bR^d)}
+\|(-\Delta)^{\sigma/2}(\eta u)-\eta (-\Delta)^{\sigma/2}u\|_{L_p(\bR^d)}.
\end{multline}
It suffices to estimate the second term on the right-hand side above since the estimate of the third term is similar. We compute
\begin{multline*}
L(\eta u)-\eta Lu\\
=\int_{\bR^d}\Big(\big(\eta(x+y)-\eta(x)\big)u(x+y)-y\cdot \nabla \eta(x) u(x)\chi^{(\sigma)}(y)\Big)K(y)\,dy.
\end{multline*}

{\em (i)} For $\sigma\in (0,1)$, we have
\begin{align*}
|L(\eta u)-\eta Lu|&\le \int_{\bR^d}\big|(\eta(x+y)-\eta(x))u(x+y)\big|K(y)\,dy\\
&\le N\left(\int_{B_1}+\int_{B_1^c}\right)
\big|(\eta(x+y)-\eta(x))|u(x+y)|\big||y|^{-d-\sigma}\,dy.
\end{align*}
By using the obvious bound
\begin{equation}
                                                \label{eq17.01}
|\eta(x+y)-\eta(x)|\le N|y|1_{|x|<3}\quad \text{for}\,\, y \in B_1, 
\end{equation}
we get
\begin{multline}
							\label{eq20_02}
|L(\eta u)-\eta Lu|\le
N\int_{B_1}1_{|x|<3}|u(x+y)||y|^{1-d-\sigma}\,dy\\
+\int_{B_1^c}|u(x+y)|\big(1_{|x+y|<2}+1_{|x|<2}\big)|y|^{-d-\sigma}\,dy.
\end{multline}
By Minkowski's inequality and H\"older's inequality,
\begin{equation}
							\label{eq20_01}
\|L(\eta u)-\eta Lu\|_{L_p(\bR^d)}\le N\|u\|_{L_p(\bR^d,\omega)},
\end{equation}
which together with \eqref{eq21.39} yields \eqref{eq21.06}.
Indeed, to obtain the above estimate the last term in \eqref{eq20_02} is calculated as follows.
\begin{multline*}
\bigg\| \int_{|y|>1} 1_{|x|<2} |u(x+y)| |y|^{-d-\sigma} \, dy \bigg\|_{L_p(\bR^d)}
\le 2 \int_{\bR^d} \|u(\cdot+y)\|_{L_p(B_2)} \omega(y) \, dy\\
\le 2 \left( \int_{\bR^d} \|u\|_{L_p(B_2(y))}^p \omega(y) \, dy \right)^{1/p} \left( \int_{\bR^d} \omega(y) \, dy \right)^{1/q}\\
\le N \left( \int_{\bR^d} |u(x)|^p \int_{B_2(x)} \omega(y) \, dy \, dx\right)^{1/p} \le N \|u\|_{L_p(\bR^d,\omega)},
\end{multline*}
where $q=p/(p-1)$.

{\em (ii)} For $\sigma\in (1,2)$, we have
\begin{align}
|L(\eta u)-\eta Lu|&\le \int_{\bR^d}\big|(\eta(x+y)-\eta(x))u(x+y)-y\cdot \nabla \eta(x)u(x)\big|K(y)\,dy\nonumber\\
&\le I_1+I_2,                       \label{eq22.05}
\end{align}
where
\begin{align*}
I_1&:=\int_{\bR^d}\big|(\eta(x+y)-\eta(x))(u(x+y)-u(x))\big|K(y)\,dy,\\
I_2&:=\int_{\bR^d}\big|(\eta(x+y)-\eta(x)-y\cdot \nabla \eta(x))u(x)\big|K(y)\,dy.
\end{align*}
Note that
$$
|u(x+y)-u(x)|\le |y|\int_0^1 |\nabla u(x+ty)|\,dt.
$$
We use \eqref{eq17.01} and the bound above to estimate $I_1$ by
\begin{align*}
I_1:&=\Big(\int_{B_1}+\int_{B_1^c}\Big)
\big|(\eta(x+y)-\eta(x))(u(x+y)-u(x))\big|K(y)\,dy\\
&\le N\int_{B_1}\int_0^1 1_{|x|<3}|\nabla
u(x+ty)||y|^{2-d-\sigma}\,dt\,dy\\
&\quad +N\int_{B_1^c}\big(|u(x+y)|+|u(x)|\big)\big(1_{|x+y|<2}+1_{|x|<2}\big)|y|^{-d-\sigma}\,dy.
\end{align*}
By Minkowski's inequality and H\"older's inequality as used for \eqref{eq20_01},
\begin{equation}
                                \label{eq22.06}
\|I_1\|_{L_p(\bR^d)}\le
N\|Du\|_{L_p(B_4)}+N\|u\|_{L_p(\bR^d,\omega)}.
\end{equation}
Note that by the mean value theorem,
\begin{equation*}
\big|\eta(x+y)-\eta(x)-y\cdot \nabla \eta(x)\big|\le
N|y|^21_{|x|<3}\quad\text{for}\,\,y \in B_1.
\end{equation*}
Thus we have
\begin{multline*}
I_2\le N|u(x)|1_{|x|<3}\int_{B_1}|y|^{2-d-\sigma}\,dy\\
+N|u(x)|\int_{B_1^c}\big(1_{|x+y|<2}+1_{|x|<2}(1+|y|)\big)|y|^{-d-\sigma}\,dy.
\end{multline*}
Again, by Minkowski's inequality and H\"older's inequality,
$$
\|I_2\|_{L_p(\bR^d)}\le N\|u\|_{L_p(\bR^d,\omega)},
$$
which together with \eqref{eq22.05} and \eqref{eq22.06} gives
$$
\|L(\eta u)-\eta Lu\|_{L_p(\bR^d)}\le
N\|u\|_{L_p(\bR^d,\omega)}+N\|Du\|_{L_p(B_4)},
$$
and thus \eqref{eq21.06b}.

{\em (iii)} In the last case $\sigma=1$, by using \eqref{eq21.47},
for any $\delta\in (0,1)$ we have
$$
|L(\eta u)-\eta Lu|\le I_3+I_4+I_5,
$$
where
\begin{align*}
I_3&:=\int_{B_\delta}\big|(\eta(x+y)-\eta(x))(u(x+y)-u(x))\big|K(y)\,dy,\\
I_4&:=\int_{B_\delta}\big|(\eta(x+y)-\eta(x)-y\cdot \nabla \eta(x))u(x)\big|K(y)\,dy,\\
I_5&:=\int_{B_\delta^c}\big|(\eta(x+y)-\eta(x))u(x+y)\big|K(y)\,dy.
\end{align*}
We bound $I_3$ and $I_4$ in the same way as $I_1$ and $I_2$ to get
\begin{align*}
I_3&\le N\int_{B_\delta}\int_0^1 1_{|x|<3}|\nabla u(x+ty)||y|^{1-d}\,dt\,dy,\\
I_4&\le N\int_{B_\delta}1_{|x|<3}|u(x)||y|^{1-d}\,dy,
\end{align*}
and bound $I_5$ as in the first case to get
\begin{multline*}
I_5\le N\int_{B_1\setminus
B_\delta}1_{|x|<3}|u(x+y)||y|^{-d}\,dy\\
+N\int_{B_1^c}|u(x+y)|(1_{|x+y|<2}+1_{|x|<2})|y|^{-d-1}\,dy.
\end{multline*}
Thus, by Minkowski's inequality and H\"older's inequality,
$$
\|L(\eta u)-\eta Lu\|_{L_p(\bR^d)}\le N\delta\|Du\|_{L_p(B_4)}
+N(1-\log(\delta))\|u\|_{L_p(\bR^d,\omega)}.
$$
By choosing a suitable $\delta$, we obtain \eqref{eq21.06c}. The claim is proved.


\section*{Acknowledgement}
The authors would like to thank the referee for the careful reading of the manuscript and many useful comments.

\end{document}